\numberwithin{equation}{section}
\newtheorem{thm}{Theorem}[section]
\newtheorem{prop}[thm]{Proposition}
\newtheorem{lem}[thm]{Lemma}
\newtheorem{cor}[thm]{Corollary}
\theoremstyle{definition}
\newtheorem{rem}[thm]{Remark}
\let\oldproofname=\proofname
\renewcommand{\proofname}{\rm\bf{\oldproofname}}
\newcommand{\N}{\mathbb{N}}
\newcommand{\R}{\mathbb{R}}
\newcommand{\C}{\mathbb{C}}
\newcommand{\T}{\mathbb{T}}
\newcommand{\cA}{\mathcal{A}}
\newcommand{\cB}{\mathcal{B}}
\newcommand{\cE}{\mathcal{E}}
\newcommand{\cM}{\mathcal{M}}
\newcommand{\cN}{\mathcal{N}}
\newcommand{\cO}{\mathcal{O}}
\newcommand{\cS}{\mathcal{S}}
\newcommand{\dd}{\,{\rm d}}
\newcommand{\D}{{\rm d}}
\renewcommand{\div}{\mathop{\mathrm{div}}\nolimits}
\newcommand{\bu}{\mathrm{bu}}
\newcommand{\QED}{\mbox{}\hfill$\Box$}
\renewcommand{\:}{\thinspace :}
\newcommand{\cbf}{\mathbf{c}}
\newcommand{\fbf}{\mathbf{f}}
\newcommand{\ul}{\mathrm{ul}}
\newcommand{\1}{\mathbf{1}}
\begin{document}

\title{Diffusive relaxation to equilibria for an extended \\ 
reaction-diffusion system on the real line}

\author{{\bf Thierry Gallay} and {\bf Sini\v{s}a Slijep\v{c}evi\'c}}

\date{June 29, 2021}

\maketitle

\begin{abstract}
We study the long-time behavior of the solutions of a two-component
reaction-diffusion system on the real line, which describes the basic chemical
reaction $\cA \xrightleftharpoons[]{} 2 \cB$.  Assuming that the initial
densities of the species $\cA, \cB$ are bounded and nonnegative, we prove that
the solution converges uniformly on compact sets to the manifold $\cE$ of all
spatially homogeneous chemical equilibria. The result holds even if the species
diffuse at very different rates, but the proof is substantially simpler for
equal diffusivities. In the spirit of our previous work on extended dissipative 
systems \cite{GS1}, our approach relies on localized energy estimates, and
provides an explicit bound for the time needed to reach a neighborhood of the
manifold $\cE$ starting from arbitrary initial data. The solutions 
we consider typically do not converge to a single equilibrium as 
$t \to +\infty$, but they are always quasiconvergent in the sense that 
their $\omega$-limit sets consist of chemical equilibria.
\end{abstract}

\section{Introduction}\label{sec1}

Reaction-diffusion systems satisfying a detailed or complex balance condition
provide interesting examples of evolution equations where the qualitative
behavior of the solutions can be studied using entropy methods. Such systems
typically describe reversible chemical reactions of the form
\begin{equation}\label{chemreact}
  \alpha_1 \cA_1 + \dots + \alpha_n \cA_n \,\xrightleftharpoons[~k'~]{k}\,
  \beta_1 \cA_1 + \dots + \beta_n \cA_n\,, 
\end{equation}
where $\cA_1, \dots, \cA_n$ denote the reactant and product species, $k, k' > 0$
are the reaction rates, and the nonnegative integers $\alpha_i, \beta_i$
($i = 1,\dots,n$) are the stoichiometric coefficients. According to the 
law of mass action, the concentration $c_i(x,t)$ of the species $\cA_i$ 
satisfies the reaction-diffusion equation
\begin{equation}\label{RDgen}
  \partial_t c_i \,=\, d_i \Delta c_i + (\beta_i - \alpha_i) 
  \biggl(k\prod_{j=1}^n c_j^{\alpha_j} - k' \prod_{j=1}^n c_j^{\beta_j}
  \biggr)\,, \qquad i = 1,\dots,n\,,
\end{equation}
where $\Delta$ is the Laplace operator acting on the space variable $x$, and
$d_i > 0$ denotes the diffusion coefficient of species $\cA_i$.  We refer the
reader to \cite{HJ,SRJ,DFT,Mi2} for a more detailed mathematical modeling of
chemical reactions, including the realistic situation where several reactions
occur at the same time. For general kinetic systems, there is a notion of {\em
  detailed balance}, which asserts that all reactions are reversible and
individually in balance at each equilibrium state, and a weaker notion of {\em
  complex balance}, which only requires that each reactant or product complex is
globally at equilibrium if all reactions are taken into account.  In the present
paper, we focus on a particular example of the single-reaction system
\eqref{RDgen}, for which the detailed balance condition is automatically
satisfied.

In recent years, many authors investigated the long-time behavior of solutions
to reaction-diffusion systems with complex or detailed balance, assuming that
the reaction takes place in a bounded domain $\Omega \subset \R^N$ and using an
entropy method that we briefly explain in the case of system \eqref{RDgen} with
$k = k'$. If $\cbf(t) = (c_1(t),\dots,c_n(t))$ is a solution of \eqref{RDgen} in
$\Omega$ satisfying no-flux boundary conditions on $\partial\Omega$, we have the
entropy dissipation law $\frac{\D}{\D t}\Phi(\cbf(t)) = -D(\cbf(t))$, where $\Phi$ 
is the entropy function defined by
\begin{equation}\label{Phidef}
  \Phi(\cbf) \,=\, \sum_{i=1}^n \int_\Omega \phi\bigl(c_i(x)\bigr)\dd x\,, \qquad
  \phi(z) = z\log(z)-z+1\,,
\end{equation}
and $D$ is the entropy dissipation
\begin{equation}\label{Ddef}
  D(\cbf) \,=\, \sum_{i=1}^n d_i \int_\Omega \frac{|\nabla c_i(x)|^2}{c_i(x)}
  \dd x + k \int_\Omega \log\biggl(\frac{B(x)}{A(x)}\biggr)\Bigl(B(x) - A(x)
  \Bigr)\dd x\,,
\end{equation}
where $A(x) = \prod c_j(x)^{\alpha_j}$, $B(x) = \prod c_j(x)^{\beta_j}$. It is
clear from \eqref{Ddef} that the entropy dissipation $D(\cbf)$ is nonnegative
and vanishes if and only if the concentrations $c_i$ are spatially homogeneous
($\nabla c_i = 0$) and the system is at chemical equilibrium ($A = B$). The
entropy is therefore a Lyapunov function for \eqref{RDgen}, and using LaSalle's
invariance principle one deduces that all bounded solutions converge to
homogeneous chemical equilibria as $t \to +\infty$ \cite{Gr1,Ro}.  In addition,
under appropriate assumptions, the entropy dissipation $D(\cbf)$ can be bounded
from below by a multiple of the entropy $\Phi(\cbf)$, or more precisely of the
{\em relative entropy} $\Phi(\cbf\,|\,\cbf_*)$ with respect to some equilibrium
$\cbf_*$. Such a lower bound can be established using a compactness argument
\cite{GGH,Gr2}, or invoking functional inequalities such a the logarithmic
Sobolev inequality \cite{DF1,DF2,DFT,FT1,FT2,Mi2,PSZ}. This leads to a first order
differential inequality for the relative entropy, which implies exponential
convergence in time to equilibria. In its constructive form, this
entropy-dissipation approach even provides explicit estimates of the convergence
rate and of the time needed to reach a neighborhood of the final equilibrium
\cite{DF1,DF2}. It is also worth mentioning that the reaction-diffusion system
\eqref{RDgen} is actually the {\em gradient flow} of the entropy function
\eqref{Phidef} with respect to an appropriate metric based on the Wasserstein
distance for the diffusion part of the system \cite{LM,Mi1,MHM}. Finally, we
observe that Lyapunov functions such as the entropy \eqref{Phidef} were also
useful to prove global existence of solutions to reaction-diffusion systems, see
\cite{CV,CGV,FMT,Fi,GV,Pi,PSY,So}.

Much less is known on the dynamics of the reaction-diffusion system
\eqref{RDgen} in an unbounded domain such as $\Omega = \R^N$. For bounded
solutions, the entropy \eqref{Phidef} is typically infinite, and it is known
that \eqref{RDgen} is no longer a gradient system.  Solutions such as traveling
waves, which exist in many examples, do not converge to equilibria as
$t \to +\infty$, at least not in the topology of uniform convergence on
$\Omega$. In fact, the best we can hope for in general is {\em
  quasiconvergence}, namely uniform convergence on compact subsets of $\Omega$
to the family of spatially homogeneous equilibria. That property is not
automatic at all, and has been established so far only for relatively simple
scalar equations where the maximum principle is applicable
\cite{DP,MP,PP1,PP2,P1,P2,P3}.  On the other hand, it is important to mention
that entropy is still {\em locally} dissipated under the evolution defined by
\eqref{RDgen}, in the sense that the entropy density $e(x,t)$, the entropy flux
$\fbf(x,t)$ and the entropy dissipation $d(x,t)$ satisfy the local entropy
balance equation $\partial_t e = \div \fbf - d$.  We have the explicit expressions
\begin{equation}\label{edfgen}
\begin{split}
  e(x,t) \,&=\, \sum_{i=1}^n \phi\bigl(c_i(x,t)\bigr)\,, \qquad
  \fbf(x,t) \,=\, \sum_{i=1}^n d_i \log(c_i(x,t))\nabla c_i(x,t)\,, \\
  d(x,t) \,&=\, \sum_{i=1}^n d_i \frac{|\nabla c_i(x,t)|^2}{c_i(x,t)}
  + k \log\biggl(\frac{B(x,t)}{A(x,t)}\biggr)\Bigl(B(x,t) - A(x,t)
  \Bigr)\,,
\end{split}
\end{equation}
from which we deduce the pointwise estimate $|\fbf|^2 \le Ced \log(2+e)$ for
some constant $C > 0$. This precisely means that the reaction-diffusion system
\eqref{RDgen} is an {\em extended dissipative system} in the sense of our
previous work \cite{GS1}. If $N \le 2$, the results of \cite{GS1} show that all
bounded solutions of \eqref{RDgen} in $\R^N$ converge uniformly on compact
subsets to the family of spatially homogeneous equilibria for ``almost all''
times, i.e. for all times outside a subset of $\R_+$ of zero density in the
limit where $t \to +\infty$. In particular, the $\omega$-limit set of any
bounded solution, with respect to the topology of uniform convergence on compact
sets, always contains an equilibrium.  It should be mentioned, however, that
extended dissipative systems in the sense of \cite{GS1} may have
non-quasiconvergent solutions, even in one space dimension. A typical phenomenon
that prevents quasiconvergence is the coarsening dynamics that is observed, for
instance, in the one-dimensional Allen-Cahn equation \cite{ER,P1}.

In the present paper, we consider a very simple particular case of the
reaction-diffusion system \eqref{RDgen}, for which we can prove that all
positive solutions converge uniformly on compact sets to the family of spatially
homogeneous equilibria. In that example we only have two species $\cA$, $\cB$
which participate to the simplistic reaction
\begin{equation}\label{chem2}
  \cA \,\xrightleftharpoons[~k~]{k}\, 2 \cB\,.
\end{equation}
Denoting by $u,v$ the concentrations of $\cA, \cB$, respectively, 
we obtain the system
\begin{equation}\label{RD}
\begin{split}
  u_t(x,t) \,&=\, a u_{xx}(x,t) + k\bigl(v(x,t)^2-u(x,t)\bigr)\,, \\
  v_t(x,t) \,&=\, b v_{xx}(x,t) + 2k\bigl(u(x,t)-v(x,t)^2\bigr)\,,
\end{split}
\end{equation}
which is considered on the whole real line $\Omega = \R$. The parameters are the
diffusion coefficients $a,b > 0$ and the reaction rate $k > 0$, but scaling
arguments reveal that the ratio $a/b$ is the only relevant quantity. It is not
difficult to verify that, given bounded and nonnegative initial data $u_0, v_0$,
the system \eqref{RD} has a unique global solution that remains bounded and
nonnegative for all positive times, see Proposition~\ref{prop:exist} below for a
precise statement. Our goal is to investigate the long-time behavior of those
solutions, using the local form of the entropy dissipation and some additional
properties of the system.

As a warm-up we consider the case of equal diffusivities $a = b$, which is
considerably simpler because the function $w = 2u + v$ then satisfies the
one-dimensional heat equation $w_t = a w_{xx}$. Using that observation, it is
easy to prove the following result\:

\begin{prop}\label{main1}
If $a = b$ any bounded nonnegative solution of \eqref{RD} satisfies, 
for all $t > 0$, 
\begin{equation}\label{mainest1}
  t\,\|u_x(t)\|_{L^\infty}^2 + t\,\|v_x(t)\|_{L^\infty}^2 + (1+t) 
  \|u(t) - v(t)^2\|_{L^\infty} \,\le\, C\,, 
\end{equation}
where the constant only depends on the parameters $a,k$ and on 
$\|u_0\|_{L^\infty},\|v_0\|_{L^\infty}$. 
\end{prop}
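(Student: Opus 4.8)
The plan is to use the heat equation for $w = 2u+v$ to control one combination of the gradients, and to exploit a hidden \emph{damped} structure governing the remaining unknowns. Throughout I use the a priori bounds $\|u(t)\|_{L^\infty}, \|v(t)\|_{L^\infty} \le M$, with $M$ depending only on $\|u_0\|_{L^\infty}$ and $\|v_0\|_{L^\infty}$, furnished by Proposition~\ref{prop:exist}, and in particular the nonnegativity $v \ge 0$. Since $w_t = a w_{xx}$ with bounded data $w_0 = 2u_0 + v_0$, the standard heat-kernel bound $\|\partial_x e^{at\partial_{xx}}\|_{L^\infty \to L^\infty} \le C(at)^{-1/2}$ immediately gives $\|w_x(t)\|_{L^\infty} \le C t^{-1/2}$, which already yields the required control of $t\,\|w_x(t)\|_{L^\infty}^2$.

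Next I eliminate $u = (w-v)/2$ from the second equation of \eqref{RD}, obtaining the scalar equation $v_t = a v_{xx} - k(1+2v)v + kw$; differentiating once, $p := v_x$ solves
\[
  p_t \,=\, a p_{xx} - k(1+4v)\,p + k w_x\,.
\]
The decisive point is that $v \ge 0$ forces the zeroth-order coefficient $k(1+4v)$ to be bounded below by $k > 0$, so the equation is uniformly damped. Comparing $|p|$ with the solution of the scalar ODE $M'(t) = -kM(t) + k\|w_x(t)\|_{L^\infty}$ (a maximum-principle argument) gives, for $0 < t_0 < t$,
\[
  \|p(t)\|_{L^\infty} \,\le\, e^{-k(t-t_0)}\|p(t_0)\|_{L^\infty}
  + k\int_{t_0}^t e^{-k(t-s)}\|w_x(s)\|_{L^\infty}\dd s\,.
\]
For short times a Duhamel estimate on $v_t = av_{xx} + 2k(u-v^2)$, using only boundedness of the right-hand side, yields $\|v_x(t)\|_{L^\infty} \le C t^{-1/2}$ on $(0,1]$. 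Inserting $\|p(1)\|_{L^\infty} \le C$ and $\|w_x(s)\|_{L^\infty} \le C s^{-1/2}$ into the displayed inequality with $t_0 = 1$, and estimating the convolution by splitting the integral at $s = t/2$, I obtain $\|v_x(t)\|_{L^\infty} \le C t^{-1/2}$ for all $t > 0$. Since $u_x = (w_x - v_x)/2$, the same decay holds for $u_x$, which settles the first two terms in \eqref{mainest1}.

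Finally, for the reaction term $z := u - v^2$ a direct computation gives
\[
  z_t \,=\, a z_{xx} - k(1+4v)\,z + 2a\,v_x^2\,,
\]
the same damped operator now driven by the source $2a v_x^2$. Using $\|v_x(s)\|_{L^\infty}^2 \le C/s$ together with the trivial bound $\|z(t)\|_{L^\infty} \le M + M^2$ for $t \le 1$, and applying the comparison inequality from $t_0 = 1$, the convolution estimate $\int_1^t e^{-k(t-s)} s^{-1}\dd s \le C/t$ yields $\|z(t)\|_{L^\infty} \le C/t$ for $t \ge 1$, hence $(1+t)\|z(t)\|_{L^\infty} \le C$ for all $t > 0$.

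The algebraic derivations and the convolution estimates are routine; the substance of the argument is the damped structure of the equations for $v_x$ and $z$, whose damping rate is bounded below by $k$ precisely because $v \ge 0$. The only genuine subtlety is that the data is merely bounded, so $v_x$ is not controlled at $t = 0$; this forces the two-stage organization — parabolic smoothing on $(0,1]$ followed by the damped-decay estimate on $[1,\infty)$ — which I expect to be the main point requiring care.
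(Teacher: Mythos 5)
Your proof is correct and follows essentially the same route as the paper: the heat equation for $w = 2u+v$, the damped equation $p_t = a p_{xx} - k(1+4v)p + k w_x$ for $v_x$ (with damping rate $\ge k$ coming precisely from $v \ge 0$), the analogously damped equation for $z = u - v^2$ driven by $2a v_x^2$, and a short-time Duhamel smoothing estimate to initialize the damped-decay argument. The only (harmless) deviation is that you recover $u_x = (w_x - v_x)/2$ algebraically, whereas the paper estimates $u_x$ through its own damped equation $p_t = p_{xx} - p + 2v v_x$; your shortcut is a slight simplification and the comparison with the scalar ODE $M' = -kM + k\|w_x\|$ is a valid substitute for the paper's semigroup bound $\Sigma(t,s) \le e^{-(t-s)}S(t-s)$.
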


Proposition~\ref{main1} implies that all nonnegative solutions converge, 
uniformly on compact intervals $I \subset \R$, to the manifold of spatially 
homogeneous equilibria defined by
\begin{equation}\label{cEdef}
  \cE \,=\, \Bigl\{(\bar u,\bar v) \in \R_+^2\,;\, \bar u = 
  \bar v^2\Bigr\}\,,
\end{equation}
see Corollary~\ref{cor:ulconv} below for a precise statement. In other words,
the $\omega$-limit set of any solution, with respect to the topology of uniform
convergence on compact sets, is entirely contained in $\cE$.  The proof shows
that the decay rates given by \eqref{mainest1} cannot be improved in general.
Moreover, it is clear that the $\omega$-limit set is not always reduced to a
single equilibrium, because examples of nonconvergent solutions can be
constructed even for the linear heat equation on $\R$, see \cite{CE}.

The proof Proposition~\ref{main1} heavily relies on the simple evolution
equation satisfied by the auxiliary function $w = 2u+v$, which is specific to
the case of equal diffusivities. The analysis becomes much more challenging when
$a \neq b$, because system~\eqref{RD} does not reduce to a scalar equation.  Our
result in the general case is slightly weaker, and can be stated as follows.

\begin{prop}\label{main2}
Any bounded nonnegative solution of \eqref{RD} satisfies, for all $t > 0$, 
\begin{equation}\label{mainest2}
  \|u_x(t)\|_{L^\infty} + \|v_x(t)\|_{L^\infty} \,\le\, 
  \frac{C}{t^{1/2}}\,\log(2+t)\,, \qquad  \|u(t) - v(t)^2\|_{L^\infty} 
  \,\le\, \frac{C}{(1+t)^{1/2}}\,,
\end{equation}
where the constant only depends on the parameters $a,b,k$ 
and on $\|u_0\|_{L^\infty},\|v_0\|_{L^\infty}$. 
\end{prop}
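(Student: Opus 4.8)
The plan is to reduce everything to the long-time regime and to exploit the algebraic structure of \eqref{RD} through two auxiliary quantities: the \emph{slow variable} $w = 2u+v$, for which the reaction terms cancel so that $w_t = a w_{xx} + (b-a)v_{xx}$, and the \emph{reaction coordinate} $\rho = u - v^2$, which measures the distance to the equilibrium manifold $\cE$. A direct computation gives
\begin{equation*}
  \rho_t \,=\, a\rho_{xx} - k(1+4v)\rho + 2(a-b)\,v\,v_{xx} + 2a\,v_x^2\,.
\end{equation*}
The crucial point is that $\rho$ obeys a linear parabolic equation with damping potential $k(1+4v) \ge k > 0$, so the associated evolution operator is bounded by $e^{-k(t-s)}$ on $L^\infty$ by the maximum principle, whereas $w$ is damped only through diffusion. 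Solving the relations $2u_x + v_x = w_x$ and $u_x - 2v v_x = \rho_x$ for $(u_x,v_x)$ gives $u_x = (2v\,w_x + \rho_x)/(1+4v)$ and $v_x = (w_x - 2\rho_x)/(1+4v)$; since $v$ is bounded and $1+4v \ge 1$, the gradient bound in \eqref{mainest2} follows once $\|w_x(t)\|_{L^\infty}$ and $\|\rho_x(t)\|_{L^\infty}$ are controlled.

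First I would dispose of small times. By Proposition~\ref{prop:exist} the solution is bounded and nonnegative, and parabolic smoothing makes all spatial derivatives bounded on $[1,\infty)$ in terms of $\|u_0\|_{L^\infty}, \|v_0\|_{L^\infty}$ and the parameters; this yields \eqref{mainest2} for $0 < t \le 2$ after adjusting constants, so it remains to establish the decay as $t \to +\infty$. Next I would estimate the reaction coordinate: writing $\rho$ through the Duhamel formula for $a\partial_{xx} - k(1+4v)$ and splitting the time integral at $t/2$, the exponential damping localises the source near $s\approx t$ and gives, schematically,
\begin{equation*}
  \|\rho(t)\|_{L^\infty} \,\lesssim\, e^{-kt/2} + \sup_{s \ge t/2}\Bigl(\,|a-b|\,\|v\,v_{xx}(s)\|_{L^\infty} + \|v_x(s)\|_{L^\infty}^2\Bigr)\,,
\end{equation*}
together with an analogous damped estimate for $\rho_x=(u-v^2)_x$. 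The slow variable is handled by the inhomogeneous heat semigroup: from $w_x(t) = e^{a(t-1)\partial_{xx}}w_x(1) + (b-a)\int_1^t \partial_x e^{a(t-s)\partial_{xx}}v_{xx}(s)\dd s$ and $\|\partial_x e^{a\tau\partial_{xx}}\|_{L^\infty\to L^\infty}\lesssim \tau^{-1/2}$, one gets $\|w_x(t)\|_{L^\infty}\lesssim t^{-1/2} + \int_1^t (t-s)^{-1/2}\|v_{xx}(s)\|_{L^\infty}\dd s$.

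Finally I would close the estimates by a continuity argument on a window $[1,T]$: introduce $M_1(T)=\sup_{1\le t\le T} t^{1/2}(\log(2+t))^{-1}(\|u_x\|_{L^\infty}+\|v_x\|_{L^\infty})$, $M_2(T)=\sup_{1\le t\le T}(1+t)^{1/2}\|\rho\|_{L^\infty}$, and a third quantity controlling $\|v_{xx}\|_{L^\infty}$, then show that the inequalities above force these to be bounded independently of $T$, the convolution $\int_1^t (t-s)^{-1/2}s^{-1}\log s\dd s \lesssim t^{-1/2}\log t$ producing exactly the logarithmic factor in \eqref{mainest2}. The main obstacle is the cross-diffusion source $(b-a)v_{xx}$ — equivalently, the fact that the null direction $(2v,1)$ of the reaction rotates with $v$, so that no constant-coefficient combination decouples the slow and fast modes. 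This term carries one more derivative than the quantity it feeds, which is why the bootstrap must be run simultaneously on $\rho$, $w_x$ and $v_{xx}$, and why the unequal-diffusivity case loses both the sharp rate $(1+t)^{-1}$ for $\rho$ obtained in Proposition~\ref{main1} and an additional logarithm in the gradient estimate.
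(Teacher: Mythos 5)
Your scheme is a pure $L^\infty$ semigroup bootstrap, and it contains a genuine gap: the loop on $(\rho, w_x, v_{xx})$ does not close, because each estimate loses a derivative and the convolution kernels decay too slowly. Concretely, the only evolution equation available for your third quantity is $(v_{xx})_t = b\,(v_{xx})_{xx} + 2k\rho_{xx}$, and in the Duhamel formula the near-field piece $\int_{t-1}^t (t-s)^{-1}\|\rho(s)\|_{L^\infty}\dd s$ is logarithmically divergent; taming it requires $\|\rho_x\|_{L^\infty}$, a fourth quantity, whose own equation involves $v\,v_{xxx}$ --- an infinite regress of derivative losses that your three-quantity bootstrap never terminates. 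Moreover, the best decay a pointwise scheme can extract for $v_{xx}$ from $\|\rho(t)\|_{L^\infty}\lesssim t^{-1/2}$ is again $t^{-1/2}$ (up to logarithms), and then your slow-variable estimate yields $\int_1^t (t-s)^{-1/2}\,s^{-1/2}\dd s = O(1)$: \emph{no decay at all} for $w_x$, hence none for $u_x,v_x$. The convolution $\int_1^t (t-s)^{-1/2} s^{-1}\log s \dd s \lesssim t^{-1/2}\log t$ that you invoke to produce the logarithm presupposes $\|v_{xx}(s)\|_{L^\infty}\lesssim s^{-1}$, which is precisely the conjectured optimal rate \eqref{higherest} that is \emph{not} known when $a\neq b$; assuming it makes the argument circular. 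Finally, even at matching rates the $\rho \leftrightarrow v_{xx}$ cycle gains no small factor (the constants are $O(|a-b|)$ times bounded quantities), so a continuity argument has no contraction to exploit unless $a/b$ is close to $1$, whereas Proposition~\ref{main2} is claimed for all $a,b>0$.

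This obstruction is not incidental: the paper's own stability analysis in Section~\ref{sec5} shows that the linearized kernel satisfies $\|\cS(t)\cM\|_{L^1} \sim |\nu|\,t^{-1}$ (second estimate in \eqref{Sest}), which is borderline non-integrable in time, and the authors explicitly state that with these optimal kernel bounds they cannot even prove uniform boundedness of small $L^\infty$ solutions of the nonlinear problem when $a\neq b$. Your proposal is exactly such a semigroup method and inherits the same obstruction. The paper's actual proof supplies the missing integrable gain by completely different means: it constructs a second local dissipative structure \emph{ordered} with respect to the first one ($\tilde e \le C_4\, d$, Section~\ref{sec3}) and runs localized energy estimates with weight $\cosh(\epsilon(x-x_0))^{-1}$, $\epsilon\sim T^{-1/2}$, leading to \eqref{tEineq2} and then to the uniformly local space-integrated bounds \eqref{decay1} and \eqref{decay4}; in particular $\|\rho(t)\|_{L^1_\ul}\lesssim t^{-1/2}$. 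It is this spatial $L^1_\ul$ information --- not any pointwise bound --- that makes the final Duhamel argument of Lemma~\ref{lem:uvbetter} converge, the far-field kernel there being $(t-s)^{-1}s^{-1/2}$, which integrates to exactly the $t^{-1/2}\log(2+t)$ in \eqref{mainest2}. Without an ingredient of this type (an integrated, uniformly local decay estimate coming from the dissipative structure), the purely pointwise bootstrap you describe stalls at non-decaying bounds.
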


The decay rates of the derivatives $u_x, v_x$ in \eqref{mainest2} agree with
\eqref{mainest1} up to a logarithmic correction, but the estimate of the
difference $u - v^2$, which measures the distance to the local chemical
equilibrium, is substantially weaker in the general case. We conjecture that the
discrepancy between the conclusions of Propositions~\ref{main1} and \ref{main2}
is of technical nature, and that the optimal estimates \eqref{mainest1} remain
valid when $a \neq b$.  At this point, it is worth mentioning that the bounds
\eqref{mainest2} are actually derived from a {\em uniformly local} estimate
which fully agrees with the decay rates given in \eqref{mainest1}. Indeed, we
shall prove in Section~\ref{sec4} that any bounded nonnegative solution to
\eqref{RD} satisfies, for any $t > 0$,
\begin{equation}\label{ulest}
  \sup_{x_0 \in \R}\,\int_{x_0-\sqrt{t}}^{x_0+\sqrt{t}} \Bigl(|u_x(x,t)|^2 + |v_x(x,t)|^2 + 
  \bigl|u(x,t) - v(x,t)^2\bigr|\Bigr) \dd x \,\le\, C t^{-1/2}\,, 
\end{equation}
where the constant depends only on the parameters $a,b,k$ and on the initial
data. It is obvious that \eqref{mainest1} implies \eqref{ulest}, but the 
converse is not quite true and the best we could obtain so far is the weaker 
estimate \eqref{mainest2}. 

As before, we can conclude that all solutions converge uniformly on compact sets
to the manifold $\cE$ as $t \to +\infty$.  

\begin{cor}\label{cor:ulconv}
Under the assumptions of Proposition~\ref{main2}, the solution of \eqref{RD}
satisfies, for any time $t > 0$ and any bounded interval $I \subset \R$, 
\begin{equation}\label{ulconv}
  \inf\Bigl\{\|u(t) - \bar u\|_{L^\infty(I)} + \|v(t) - \bar v\|_{L^\infty(I)}
  \,;\, (\bar u,\bar v) \in \cE\Bigr\} \,\le\, \frac{C |I|}{|I| + t^{1/2}}
  \,\log(2+t)\,,
\end{equation}
where the constant only depends on the parameters $a,b,k$ and on 
$\|u_0\|_{L^\infty},\|v_0\|_{L^\infty}$. 
\end{cor}

\begin{rem}\label{rem:positive}
It is important to keep in mind that the conclusions of Propositions~\ref{main1}
and \ref{main2} are restricted to nonnegative solutions. As a matter of 
fact, the dynamics of system~\eqref{RD} is completely different if we 
consider solutions for which the second component $v$ may take negative
values. For instance, if $a = b = k = 1$, we can look for solutions of 
the particular form
\[
  u(x,t) \,=\, 1 - \frac{3 z(x,t)}{4}\,, \qquad
  v(x,t) \,=\, - 1 + \frac{3 z(x,t)}{2}\,,
\]
in which case \eqref{RD} reduces to the Fisher-KPP equation $z_t = z_{xx} 
+ 3z(1-z)$. That equation has a pulse-like stationary solution given 
by the explicit formula
\[
  \bar z(x) \,=\, 1 - \frac{3}{2}\,\frac{1}{\cosh^2(\sqrt{3}x/2)}\,, \qquad
  x \in \R\,, 
\]
which provides an example of a steady state $(\bar u,\bar v)$ for \eqref{RD}
that is not spatially homogeneous nor at chemical equilibrium, in the sense
that $\bar u \neq \bar v^2$. Moreover, for any speed $c > 0$, the Fisher-KPP 
equation has traveling wave solutions of the form $z(x,t) = \varphi(x-ct)$ 
where the wave profile $\varphi$ satisfies $\varphi(-\infty) = 1$ and 
$\varphi(+\infty) = 0$.  For the corresponding solutions of \eqref{RD}, the 
quantities $\|u_x(t)\|_{L^\infty}$, $\|v_x(t)\|_{L^\infty}$, and $\|u(x) - 
v(t)^2\|_{L^\infty}$ are bounded away from zero for all times, in sharp 
contrast with \eqref{mainest1}.  
\end{rem}

\begin{rem}\label{rem:bounded}
Our results also apply to the situation where system~\eqref{RD}
is considered on a bounded interval $I = [0,L]$, with homogeneous Neumann 
boundary conditions, because the solutions $u,v$ can then be extended to 
even and $2L$-periodic functions on the whole real line. In that case
the total mass $M = \int_0^L \bigl(2u(x,t) + v(x,t)\bigr)\dd x$ is a 
conserved quantity, and the solution necessarily converges to the unique 
equilibrium $(u_\infty,v_\infty) \in \cE$ satisfying $2u_\infty + v_\infty = M/L$. 
As in \eqref{ulconv} we have the bound
\[
  \|u(t) - u_\infty\|_{L^\infty(I)} + \|v(t) - v_\infty\|_{L^\infty(I)}
  \,\le\, \frac{C L}{L + t^{1/2}}\,\log(2+t)\,, \qquad t \ge 0\,,
\]
which is far from optimal because, in that particular case, it is known that
convergence occurs at exponential rate, see \cite{DF1} for accurate estimates
with explicitly computable constants. However, the conclusion of
Proposition~\ref{main2} remains interesting in that situation. In particular,
the second estimate in \eqref{mainest2} shows that the time needed for a
solution to enter a neighborhood of the manifold $\cE$ depends on the $L^\infty$
norm of the initial data, but {\em not} on the length $L$ of the interval. In
contrast, all estimates obtained in \cite{DF1} and related works necessarily
involve the size of the spatial domain, because they use as a Lyapunov function 
the total entropy which is an extensive quantity in the thermodynamical sense.
\end{rem}

The proof of our main result, Proposition~\ref{main2}, is based on localized
energy (or entropy) estimates in the spirit of our previous works
\cite{GS0,GS1,GS2}. It turns out that the Boltzmann-type entropy density
introduced in \eqref{edfgen} is not the only possibility. Quite on the contrary,
there exist a large family of nonnegative quantities that are locally dissipated
under the evolution defined by the two-component system \eqref{RD}, see
Section~\ref{sec3} below for a more precise discussion. For simplicity, we chose 
to use the energy density $e(x,t)$, the energy flux $f(x,t)$, and the energy 
dissipation $d(x,t)$ given by the following expressions:
\begin{equation}\label{edf2x2}
  e \,=\, \frac{1}{2}\,u^2 + \frac{1}{6}\,v^3\,, \qquad
  f \,=\, a u u_x + \frac{b}{2}\,v^2 v_x\,, \qquad 
  d \,=\, a u_x^2 + b v v_x^2 + k(u-v^2)^2\,.
\end{equation}
If $(u,v)$ is any nonnegative solution of \eqref{RD}, one readily verifies that
the local energy balance $\partial_t e = \partial_x f - d$ is satisfied, as well
as the estimate $f^2 \le C e d$ where $C = \max(2a,3b/2)$.  Altogether, this
means that \eqref{RD} is an ``extended dissipative system'' in the sense of
\cite{GS1}. As was already mentioned, the results of \cite{GS1} provide useful
information on the gradient-like dynamics of \eqref{RD}, but this is far from
sufficient to prove Proposition~\ref{main2}.  For instance, extended dissipative
systems may have traveling wave solutions which, obviously, do not satisfy
uniform decay estimates of the form \eqref{mainest2}.

To go beyond the general results established in \cite{GS1} we follow the same
approach as in our previous work \cite{GS2}, where energy methods were developed
to study the long-time behavior of solutions for the Navier-Stokes equations in
the infinite cylinder $\R \times \T$. The main idea is to show that the energy
dissipation in \eqref{edf2x2} is itself locally dissipated under the evolution
defined by \eqref{RD}. More precisely, we look for another triple $(\tilde e,
\tilde f, \tilde d)$ satisfying the local balance $\partial_t \tilde e = 
\partial_x \tilde f - \tilde d$, and such that the flux $|\tilde f|$ can be 
controlled in terms of $\tilde e$, $\tilde d$.  We also require that
$\tilde d \ge 0$ and that $\tilde e \approx d$, where $d$ is as in
\eqref{edf2x2}. We can then use localized energy estimates as in \cite{GS1,GS2}
to prove that, on any compact interval $[x_0,x_0+L] \subset \R$, the dissipation
$d(x,t)$ becomes uniformly small for all times $t \gg L^2$.  We even get an
explicit upper bound depending only on $L$ and on the initial data, so that
taking the supremum over $x_0 \in \R$ we arrive at estimate \eqref{ulest}, which
is the crucial step in the proof of Proposition~\ref{main2}.  In contrast, we
emphasize that the bounds one can obtain using the dissipative structure
\eqref{edf2x2} alone only show that the supremum of $d(x,t)$ over $[x_0,x_0+L]$
becomes small for ``almost all'' (sufficiently large) times, thus leaving space
for non-gradient transient behaviors such as traveling wave propagation or
coarsening dynamics.

The existence of a second dissipative structure on top of \eqref{edf2x2} is
obviously an important property of system~\eqref{RD}, which we would like to
understand in greater depth. It should be related to some convexity property of
the energy density with respect to the metric that turns \eqref{RD} into a
gradient system, see \cite{LM} for a more detailed discussion of gradient
structures and convexity properties of reaction-diffusion systems. It would be
interesting to determine if that property still holds for other systems of the
form \eqref{RDgen}, such as those considered in Section~\ref{sec6} below, but so
far we have no general result in that direction. We mention that the idea of
studying the variation of the entropy dissipation, or equivalently the second
variation of the entropy, is quite common in kinetic theory, see \cite{DV}, 
as well as in fluid mechanics, see \cite{AB} for a recent review on the 
subject. 

The rest of this paper is organized as follows. In Section~\ref{sec2} we briefly
discuss the Cauchy problem for the reaction-diffusion system \eqref{RD}, and we
prove Proposition~\ref{main1} and Corollary~\ref{cor:ulconv}. After these
preliminaries, we investigate in Section~\ref{sec3} various dissipative
structures of the form \eqref{edf2x2}, which play a key role in our analysis. 
The proof of Proposition~\ref{main2} is completed in Section~\ref{sec4}, where
we use localized energy estimates inspired from our previous works \cite{GS1,GS2}. 
Section~\ref{sec5} is devoted to the stability analysis of spatially homogeneous
equilibria $(\bar u,\bar v) \in \cE$, which provides useful insight on the 
decay rates of the solutions. In the final Section~\ref{sec6}, we briefly 
discuss the potential applicability of our method to more general 
reaction-diffusion systems of the form \eqref{RDgen}, and we mention some
open problems. 

\medskip\noindent{\bf Acknowledgments.}
The authors thank Alexander Mielke for enlightening discussions at the early
stage of this project. Th.G. is supported by the grant ISDEEC ANR-16-CE40-0013
of the French Ministry of Higher Education, Research and Innovation. 
S.S. is supported by the Croatian Science Foundation under grant 
IP-2018-01-7491. 

\section{Preliminary results}\label{sec2}

We first prove that system~\eqref{RD} is globally well-posed for all initial data
$u_0$, $v_0$ that are bounded and nonnegative.  This a rather classical
statement, which can be deduced from more general results on reaction-diffusion
systems with quadratic nonlinearities, see e.g. \cite{GV,Pi,So}. For the
reader's convenience, we give here a simple and self-contained proof.

Without loss of generality, we assume henceforth that $k = 1$. We denote by
$X = C_\bu(\R)$ the Banach space of all bounded and uniformly continuous
functions $f : \R \to \R$, equipped with the uniform norm $\|f\|_{L^\infty}$. 
Since we are interested in nonnegative solutions of \eqref{RD}, we also 
define the positive cone $X_+ = \{f \in X\,;\, f(x) \ge 0~\forall x \in \R\}$.

\begin{prop}\label{prop:exist}
For all initial data $(u_0,v_0) \in X_+^2$, system~\eqref{RD} has a unique global 
(mild) solution $(u,v) \in C^0([0,+\infty),X^2)$ such that $(u(0),v(0)) = (u_0,v_0)$. 
Moreover $(u(t),v(t)) \in X_+^2$ for all $t \ge 0$, and the following estimates hold\:
\begin{equation}\label{uvbound}
\begin{split}
  \max\bigl(\|u(t)\|_{L^\infty}\,,\,\|v(t)\|_{L^\infty}^2\bigr) 
  \,&\le\, \max\bigl(\|u_0\|_{L^\infty}\,,\,\|v_0\|_{L^\infty}^2\bigr)\,, \\
  2\|u(t)\|_{L^\infty} + \|v(t)\|_{L^\infty} \,&\le\,  
  2\|u_0\|_{L^\infty} + \|v_0\|_{L^\infty}\,. 
\end{split}
\end{equation}
\end{prop}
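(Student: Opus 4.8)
The plan is to combine standard parabolic semigroup theory, which yields a local mild solution, with maximum-principle/comparison arguments that produce the a priori bounds \eqref{uvbound}; the uniform bounds then upgrade the local solution to a global one.

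First I would set up the Cauchy problem in Duhamel form. The reaction term $F(u,v) = (v^2-u,\,2(u-v^2))$ is smooth and locally Lipschitz as a map $X^2 \to X^2$, since $v \mapsto v^2$ is locally Lipschitz on bounded subsets of $X = C_\bu(\R)$. The diagonal diffusion generates on $X^2$ the diagonal $C_0$-semigroup $e^{tL} = (e^{ta\partial_{xx}},\,e^{tb\partial_{xx}})$, whose components are the one-dimensional heat semigroups; these are analytic and contractive for the uniform norm. Writing \eqref{RD} as $U(t) = e^{tL}U_0 + \int_0^t e^{(t-s)L}F(U(s))\dd s$ with $U=(u,v)$, a contraction-mapping argument gives a unique mild solution on a maximal interval $[0,T_*)$, together with the blow-up alternative: either $T_* = +\infty$, or $\|U(t)\|_{X^2}\to\infty$ as $t\to T_*$. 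Parabolic smoothing makes the solution classical and uniformly continuous with bounded derivatives for $t>0$, so that pointwise differential inequalities become available.

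Next I would prove positivity, which holds unconditionally for $u$ and then, conditionally but easily, for $v$. Since the heat semigroup is positivity-preserving and $u_0 \ge 0$, the Duhamel formula $u(t) = e^{t(a\partial_{xx}-1)}u_0 + \int_0^t e^{(t-s)(a\partial_{xx}-1)}v(s)^2\dd s$ forces $u(t)\ge 0$ throughout the existence interval, as both terms are nonnegative (note $v^2\ge0$ regardless of the sign of $v$). For $v$, I would rewrite its equation as $v_t = b v_{xx} - (2v)\,v + 2u$ and view $2v$ as a zeroth-order coefficient: as long as $v\ge0$ this coefficient is nonnegative and $2u\ge0$ is a nonnegative source, so the corresponding evolution family is positivity-preserving and $v$ cannot cross zero. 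A short continuation argument then yields $(u(t),v(t))\in X_+^2$ on $[0,T_*)$. The heart of the proof is \eqref{uvbound}, derived from the structure of $F$ on the cone. For the first bound, set $M = \max(\|u_0\|_{L^\infty},\|v_0\|_{L^\infty}^2)$ and note that the constant state $(M,\sqrt M)$ has vanishing reaction ($v^2-u=0$, $u-v^2=0$) and so is a stationary supersolution dominating the data; since the reaction is cooperative on $\{v\ge0\}$ (the off-diagonal derivatives $\partial_v(v^2-u)=2v$ and $\partial_u\,2(u-v^2)=2$ are nonnegative), the comparison principle gives $u(t)\le M$ and $v(t)\le\sqrt M$ pointwise, i.e. the first line of \eqref{uvbound}. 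For the second bound I would instead track the spatial suprema $\bar u(t)=\sup_x u(x,t)$ and $\bar v(t)=\sup_x v(x,t)$: at a point where $u$ is maximal $u_{xx}\le0$, whence $\frac{d}{dt}\bar u \le \bar v^2-\bar u$, and similarly $\frac{d}{dt}\bar v \le 2(\bar u-\bar v^2)$. The decisive point is that $2(v^2-u)+2(u-v^2)\equiv0$, so that $\frac{d}{dt}(2\bar u+\bar v)\le0$, giving $2\|u(t)\|_{L^\infty}+\|v(t)\|_{L^\infty}\le 2\|u_0\|_{L^\infty}+\|v_0\|_{L^\infty}$ for nonnegative solutions; the same pair of inequalities shows the rectangle $\{\bar u\le M,\ \bar v\le\sqrt M\}$ is invariant, recovering the first bound as well. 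Finally, since \eqref{uvbound} bounds $\|U(t)\|_{X^2}$ uniformly in terms of the data, the blow-up alternative is excluded, $T_*=+\infty$, and the solution is global.

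The step I expect to be the main obstacle is the rigorous justification of the differential inequalities for $\bar u,\bar v$ on the whole line, where the suprema need not be attained. I would handle this in the standard way, using that the solution is classical, bounded, and uniformly continuous with bounded derivatives for $t>0$: one approximates $\bar u(t)$ along a maximizing sequence and uses interior parabolic estimates to control the sign of $u_{xx}$ there, or equivalently subtracts a slowly growing confining weight to force an interior maximum and then removes it. Positivity and the first bound can alternatively be obtained entirely by comparison with constant sub- and supersolutions, which sidesteps the attainment issue; but the cancellation $2(v^2-u)+2(u-v^2)=0$ underlying the sharp second bound appears to genuinely require the supremum (or an equivalent weighted-energy) argument rather than a pointwise invariant-region argument, since the oblique level set $\{2u+v=\text{const}\}$ is not respected by the diffusion when $a\neq b$.
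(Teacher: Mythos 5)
Your proposal is correct, and its skeleton---contraction mapping with the blow-up alternative, maximum-principle positivity, then the cooperative comparison principle---matches the paper's proof; the difference is in how the bounds \eqref{uvbound} are extracted. For the first bound you compare with the stationary equilibrium $(M,\sqrt{M})$, which is clean and if anything more direct than the paper. For the second bound you differentiate the spatial suprema $\bar u(t),\bar v(t)$ and exploit the cancellation $2(v^2-u)+2(u-v^2)=0$; this works, but as you note it needs a maximizing-sequence or weighted argument on $\R$, where suprema need not be attained. The paper avoids that technicality entirely: it compares $(u,v)$ with the solution $(\bar u(t),\bar v(t))$ of the kinetic ODE system \eqref{ODE} with data $(\|u_0\|_{L^\infty},\|v_0\|_{L^\infty})$. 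A spatially homogeneous function is an exact solution of the full PDE, so the component-wise comparison principle gives $u(x,t)\le\bar u(t)$, $v(x,t)\le\bar v(t)$ directly; since the ODE conserves $2\bar u+\bar v$ and its flow satisfies $\max(\bar u(t),\bar v(t)^2)\le\max(\bar u_0,\bar v_0^2)$, both lines of \eqref{uvbound} follow at once. This shows your closing claim is too pessimistic: the sharp second bound does \emph{not} genuinely require a supremum or weighted-energy argument, and it is irrelevant that diffusion fails to preserve the oblique level set $\{2u+v=\mathrm{const}\}$ when $a\neq b$, because only the dominating spatially constant comparison solution needs to ride that level set, not $(u,v)$ itself.
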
 

\begin{proof}
Local existence of solutions in $X^2$ can be established by applying a standard 
fixed point argument to the integral equation
\[
  \begin{pmatrix} u(t) \\ v(t)\end{pmatrix} \,=\, 
  \begin{pmatrix} S(at) & 0 \\ 0 & S(bt)\end{pmatrix}
  \begin{pmatrix} u_0 \\ v_0\end{pmatrix} +   
  \int_0^t \begin{pmatrix} S(a(t{-}s)) & 0 \\ 0 & S(b(t{-}s))\end{pmatrix}
  \begin{pmatrix} v(s)^2 - u(s) \\ 2\bigl(u(s) - v(s)^2\bigr)\end{pmatrix}\dd s\,, 
\]
where $S(t) = \exp(t\partial_x^2)$ is the one-dimensional heat semigroup, see
e.g. \cite[Chapter~3]{He}. Since the nonlinearity is a polynomial of degree two,
the local existence time $T > 0$ given by the fixed point argument is no smaller
than $T_0\bigl(1+\|u_0\|_{L^\infty} +\|v_0\|_{L^\infty}\bigr)^{-1}$ for some
constant $T_0 > 0$. This shows that any local solution can be extended to a
global one, unless the quantity $\|u(t)\|_{L^\infty} + \|v(t)\|_{L^\infty}$
blows up in finite time. It remains to show that nonnegative solutions 
satisfy the estimates \eqref{uvbound}, so that blow-up cannot occur. 

Assume that $(u_0,v_0) \in X_+^2$ and let $(u,v) \in C^0([0,T_*),X^2)$ be 
the maximal solution of \eqref{RD} with initial data $(u_0,v_0)$. This 
solution is smooth for positive times, and the first component satisfies
$u_t = a u_{xx} + v^2 - u \ge a u_{xx} - u$ for $t \in (0,T_*)$. Applying 
the parabolic maximum principle \cite{PW}, we deduce that $u(t) \in X_+$ 
for all $t \in (0,T_*)$. The second component in turn satisfies $v_t = 
b v_{xx} + 2(u - v^2) \ge b v_{xx} - 2v^2$, and another application of 
the maximum principle shows that $v(t) \in X_+$ too. So the positive 
cone $X_+^2$ is invariant under the evolution defined by \eqref{RD}. 
  
Another important observation is that \eqref{RD} is a {\em cooperative} 
reaction-diffusion system in $X_+^2$, in the sense that the reaction terms 
in \eqref{RD} satisfy
\[
  \frac{\D}{\D v} \bigl(v^2 - u\bigr) \,=\, 2v \,\ge\, 0\,, \qquad
  \frac{\D}{\D u} \,2\bigl(u - v^2\bigr) \,=\, 2 \,\ge\, 0\,.
\]
As is well known, such a system obeys a (component-wise) comparison principle
\cite{VVV}. In our case, this means that, if $(u,v)$ and $(\bar u,\bar v)$ are
two solutions of \eqref{RD} in $X_+^2$, and if the initial data satisfy
$u_0 \le \bar u_0$ and $v_0 \le \bar v_0$, then $u(t) \le \bar u(t)$ and
$v(t) \le \bar v(t)$ as long as the solutions are defined.  We use that
principle to compare our nonnegative solution $(u,v)$ to the solution
$(\bar u,\bar v)$ of the ODE system 
\begin{equation}\label{ODE}
  \frac{\D}{\D t}\,\bar u(t) \,=\, \bar v(t)^2 - \bar u(t)\,, \qquad
  \frac{\D}{\D t}\,\bar v(t) \,=\, 2 \bigl(\bar u(t) - \bar v(t)^2\bigr)\,,
\end{equation}
with initial data $\bar u_0 = \|u_0\|_{L^\infty}$, $\bar v_0 = \|v_0\|_{L^\infty}$. 
The dynamics of \eqref{ODE} in the positive quadrant is very simple\: 
the solution stays on the line $L_0 = \bigl\{(\bar u,\bar v) \in \R_+^2 \,;\, 
2\bar u + \bar v = 2\bar u_0 + \bar v_0\bigr\}$ for all times, and converges 
to the unique equilibrium $(\bar u_*,\bar v_*) \in L_0 \cap \cE$, where 
$\cE$ is defined in \eqref{cEdef}; see Figure~1. In particular, we have
$\max(\bar u(t),\bar v(t)^2) \le \max(\bar u_0,\bar v_0^2)$ for all 
$t \ge 0$. Applying the comparison principle, we conclude that our 
solution $(u,v) \in C^0([0,T_*),X^2)$ satisfies estimates \eqref{uvbound}
for all $t \in [0,T_*)$, which implies that $T_* = + \infty$. 
\end{proof}

\begin{rem}\label{rem:ODE}
The equilibrium $(\bar u_*,\bar v_*)$ which attracts the solution of 
\eqref{ODE} is given by
\begin{equation}\label{uvstar}
  \bar u_* \,=\, \bar v_*^2\,, \qquad \hbox{and}\qquad 
  \bar v_* \,=\, \frac{1}{4}\Bigl(-1 + \sqrt{1 + 16 \bar u_0 + 8 \bar v_0}\Bigr)\,.
\end{equation}
As is clear from Figure~1, we have the optimal bounds
\begin{equation}\label{uvbdd}
  \min\bigl(\bar u_0, \bar u_*\bigr) \,\le\, \bar u(t) \,\le\, 
  \max\bigl(\bar u_0, \bar u_*\bigr)\,, \qquad
  \min\bigl(\bar v_0, \bar v_*\bigr) \,\le\, \bar v(t) \,\le\, 
  \max\bigl(\bar v_0, \bar v_*\bigr)\,,
\end{equation}
which can be used to improve somewhat \eqref{uvbound}. 
\end{rem}

\begin{rem}\label{rem:lowerbd}
In a similar way, we can use the comparison principle to show that the
solution of \eqref{RD} given by Proposition~\ref{prop:exist} satisfies 
$u(x,t) \ge \underline{u}(t)$ and $v(x,t) \ge \underline{v}(t)$, where 
$(\underline{u}(t),\underline{v}(t))$ is the solution of the ODE system
\eqref{ODE} with initial data
\[
  \underline{u}_0 \,=\, \inf_{x \in \R} u_0(x)\,, \qquad
  \underline{v}_0 \,=\, \inf_{x \in \R} v_0(x)\,. 
\]
Two interesting conclusions can be drawn using such lower bounds. First, if 
$\underline{v}_0 \ge \delta > 0$ for some $\delta > 0$, then $v(x,t) \ge 
2\delta\bigl(1+\sqrt{1+8\delta}\bigr)^{-1}$ for all $x \in \R$ and all $t \ge 0$. 
This observation will be used in the proof of Proposition~\ref{main2}. Second, 
any homogeneous equilibrium $(\bar u,\bar v) \in \cE$ is stable (in the 
sense of Lyapunov) in the uniform topology: for any $\epsilon > 0$, 
there exists $\delta > 0$ such that, if $\|u_0 - u_*\|_{L^\infty} + 
\|v_0 - v_*\|_{L^\infty} \le \delta$, then $\|u(t) - u_*\|_{L^\infty} + 
\|v(t) - v_*\|_{L^\infty} \le \epsilon$ for all $t \ge 0$. An explicit 
expression for $\delta$ in terms of $\epsilon$ and $u_*,v_*$ can be 
deduced from \eqref{uvstar}, \eqref{uvbdd}. 
\end{rem}

\begin{rem}\label{rem:Linfty}
In Proposition~\ref{prop:exist} we assume for simplicity that the initial
data $u_0, v_0$ are bounded and uniformly continuous, but system~\eqref{RD} 
remains globally well posed for all nonnegative data $(u_0,v_0) \in L^\infty(\R)^2$. 
The only difference in the proof is that, when $t \to 0$, the first term in the 
integral equation does not converge to $(u_0,v_0)$ in the uniform norm, but only 
in the weak-$*$ topology of $L^\infty(\R)$.  
\end{rem}

\setlength{\unitlength}{0.8cm}
\begin{center}
\begin{picture}(10,8)(-1,-1)
\thicklines
\put(-0.5,0){\vector(1,0){10.0}}
\put(0,-0.5){\vector(0,1){7.0}}
\qbezier(0,0)(0,3)(9,6)
\thinlines
\put(0.5,3){\vector(1,-1){0.5}}
\put(1.0,2.5){\line(1,-1){0.5}}
\put(3.0,0.5){\vector(-1,1){1.5}}
\put(0.5,5){\vector(1,-1){1.6}}
\put(2.1,3.4){\line(1,-1){0.6}}
\put(5.0,0.5){\vector(-1,1){2.3}}
\put(1.5,6){\vector(1,-1){1.8}}
\put(3.3,4.2){\line(1,-1){0.7}}
\put(7.0,0.5){\vector(-1,1){3.0}}
\put(3.5,6){\vector(1,-1){1.2}}
\put(4.7,4.8){\line(1,-1){0.6}}
\put(8.0,1.5){\vector(-1,1){2.7}}
\put(5.5,6){\vector(1,-1){0.6}}
\put(6.1,5.4){\line(1,-1){0.6}}
\put(8.0,3.5){\vector(-1,1){1.3}}
\put(9,0.3){$u$}
\put(0.3,6.1){$v$}
\put(8.6,5.4){$\cE$}
\put(2.3,5){\footnotesize{$\bullet$}}
\put(3.57,3.73){\footnotesize{$\bullet$}}
\put(2.3,5.3){\footnotesize{$(u_0,v_0)$}}
\put(5.2,2.5){\footnotesize{$L_0$}}
\put(3.65,-0.2){\line(0,1){0.4}}
\put(-0.2,3.85){\line(1,0){0.4}}
\put(3.79,-0.4){$u_*$}
\put(-0.7,3.8){$v_*$}
\end{picture}
\end{center}

\begin{center}
\begin{minipage}[c]{0.8\textwidth}\footnotesize
{\bf Figure~1\:} A sketch of the dynamics of the ODE system $\dot u = v^2 - u$, 
$\dot v = 2(u - v^2)$, which represents the kinetic part of \eqref{RD}. The
solution starting from the initial data $(u_0,v_0)$ stays on the line 
$L_0 = \{(u,v)\,;\, 2u + v = 2u_0 + v_0\}$ and converges there to the 
unique equilibrium $(u_*,v_*) \in L_0\cap \cE$. 
\end{minipage}
\end{center}
\medskip

\noindent{\bf Proof of Proposition~\ref{main1}.}
We assume here without loss of generality that $a = b = k = 1$. Given 
$(u_0,v_0) \in X_+^2$, let $(u,v) \in C^0([0,+\infty),X^2)$ be the 
unique global solution of \eqref{RD} with initial data $(u_0,v_0)$. 
As was already mentioned, the quantity $w = 2u+v$ satisfies the linear
heat equation $w_t = w_{xx}$ on $\R$. In particular, we have the estimate
\begin{equation}\label{wder}
  \|w_x(t)\| \,\le\, \frac{C\|w_0\|}{t^{1/2}} \,\le\, \frac{CR}{t^{1/2}}\,, 
  \qquad t > 0\,,
\end{equation}
where $R := 1 + \|u_0\| + \|v_0\|$. Here and in what follows, we denote 
$\|\cdot\| = \|\cdot\|_{L^\infty}$, and the generic constant $C$ is always
independent of the initial data $(u_0,v_0)$. 

We first estimates the derivatives $u_x(t), v_x(t)$ for $t \le t_0$, where 
$t_0 := T_0/R$ is the local existence time appearing in the proof of 
Proposition~\ref{prop:exist}. Differentiating the integral equation
and using the second inequality in \eqref{uvbound}, we easily obtain
\begin{equation}\label{uvder}
  \|u_x(t)\| + \|v_x(t)\| \,\le\, \frac{CR}{t^{1/2}} + \int_0^t 
  \frac{CR^2}{(t-s)^{1/2}}\dd s \,\le\, \frac{CR}{t^{1/2}}\,,
  \qquad 0 < t \le t_0\,.
\end{equation}
In particular, we have $\|u_x(t_0)\| + \|v_x(t_0)\| \le C R^{3/2}$. 

We next observe that the quantity $q = v_x$ satisfies the equation $q_t = q_{xx} 
- (1+4v)q + w_x$. The corresponding integral equation reads
\[
  q(t) \,=\, \Sigma(t,t_0)q(t_0) + \int_{t_0}^t \Sigma(t,s) 
  w_x(s) \dd s\,, \qquad t > t_0\,,
\]
where $\Sigma(t,s)$ is the two-parameter semigroup associated with the 
linear nonautonomous equation $q_t = q_{xx} - (1+4v)q$, assuming that 
$v(x,t)$ is given. Since $v \ge 0$, the maximum principle implies
the pointwise estimate $\Sigma(t,s) \le e^{-(t-s)} S(t-s)$, where $S(t) = 
\exp(t\partial_x^2)$ is the heat kernel. Using \eqref{wder}, \eqref{uvder},
we thus obtain 
\begin{equation}\label{qest}
\begin{split}
  \|q(t)\| \,&\le\, e^{-(t-t_0)}\|q(t_0)\| + \int_{t_0}^t e^{-(t-s)}\|w_x(s)\|\dd s \\
  \,&\le\, C R^{3/2}\,e^{-(t-t_0)} + \int_{t_0}^t e^{-(t-s)}\,\frac{CR}{s^{1/2}}\dd s
  \,\le\, \frac{CR^{3/2}}{t^{1/2}}\,, \qquad t > t_0\,.
\end{split}
\end{equation}
Note that \eqref{uvder}, \eqref{qest} imply that $\|q(t)\| \le C R^{3/2}t^{-1/2}$ 
for all $t > 0$. 

Similarly, the quantity $p = u_x$ satisfies the equation $p_t = p_{xx} 
- p + 2vq$, and we know from \eqref{uvbound} that $\|v(t)\| \le 2R$ for all 
$t \ge 0$. It follows that
\begin{equation}\label{pest}
\begin{split}
  \|p(t)\| \,&\le\, e^{-(t-t_0)}\|p(t_0)\| + 4R \int_{t_0}^t e^{-(t-s)}\|q(s)\|\dd s \\
  \,&\le\, C R^{3/2}\,e^{-(t-t_0)} + C \int_{t_0}^t e^{-(t-s)}\,\frac{R^{5/2}}{s^{1/2}}\dd s
  \,\le\, \frac{CR^{5/2}}{t^{1/2}}\,, \qquad t > t_0\,.
\end{split}
\end{equation}
Altogether we deduce from \eqref{uvder}, \eqref{qest}, \eqref{pest} that 
$t \|u_x(t)\|^2 + t \|v_x(t)\|^2 \le C R^5$ for all $t > 0$, which proves
the first inequality in \eqref{mainest1}. 

Finally, the quantity $\rho = u - v^2$ satisfies the equation $\rho_t = 
\rho_{xx} - (1+4v)\rho + 2q^2$ as well as the a priori estimate $\|\rho(t)\|
\le R^2$ for all $t \ge 0$. Proceeding as above and using \eqref{qest}, we 
find
\begin{equation}\label{rhoest}
\begin{split}
  \|\rho(t)\| \,&\le\, e^{-(t-t_0)}\|\rho(t_0)\| + 2\int_{t_0}^t e^{-(t-s)}\|q(s)\|^2\dd s \\
  \,&\le\, R^2\,e^{-(t-t_0)} + C \int_{t_0}^t e^{-(t-s)}\,\frac{R^3}{s}\dd s
  \,\le\, \frac{CR^3}{t}\,\log(1{+}R)\,, \qquad t > t_0\,.
\end{split}
\end{equation}
Thus $(1+t)\|\rho(t)\| \le C R^3\log(1+R)$ for all $t \ge 0$, which 
concludes the proof of \eqref{mainest1}. \QED

\begin{rem}\label{rem:higher}
Similarly, differentiating with respect to $x$ the evolution equations 
for the quantities $q,p,\rho$ and using an induction argument, one can 
show that the solution of \eqref{RD} with $a = b$ satisfies, for each 
integer $m \in \N$, an estimate of the form
\begin{equation}\label{higherest}
  \|\partial_x^m u(t)\|_{L^\infty} + \|\partial_x^m v(t)\|_{L^\infty} \,\le\, 
  \frac{C_m}{t^{m/2}}\,, \qquad \|\partial_x^m \rho(t)\|_{L^\infty} 
  \,\le\, \frac{C_m}{t^{m/2}(1{+}t)}\,, \qquad \forall\,t > 0\,.
\end{equation}
\end{rem}

\bigskip\noindent{\bf Proof of Corollary~\ref{cor:ulconv}.}
If $t \ge |I|^2$, we pick $x_0 \in I$ and define $\bar v = v(x_0,t)$, 
$\bar u = \bar v^2$. Then $(\bar u,\bar v) \in \cE$ and using the first 
inequality in \eqref{mainest2} we find 
\[
  \|v(\cdot,t) - \bar v\|_{L^\infty(I)} \,=\, \|v(\cdot,t) - v(x_0,t)\|_{L^\infty(I)}
  \,\le\, |I|\,\|v_x(\cdot,t)\|_{L^\infty(I)} \,\le\, \frac{C|I|}{t^{1/2}}\,
  \log(2+t)\,.
\]
Similarly, using in addition the second inequality in \eqref{mainest2}, 
we obtain
\[
  \|u(\cdot,t) - \bar u\|_{L^\infty(I)} \,\le\, \|u(\cdot,t) - u(x_0,t)\|_{L^\infty(I)}
  + |I|\,|u(x_0,t) - v(x_0,t)^2| \,\le\, \frac{C|I|}{t^{1/2}}\,
  \log(2+t)\,.
\]
Combining these bounds and recalling that $t \ge |I|^2$, we arrive at 
\eqref{ulconv}. If $t \le |I|^2$, we can take $\bar u = \bar v = 0$
and use the second bound in \eqref{uvbound} to arrive directly at 
\eqref{ulconv} (without logarithmic correction in that case). 
\QED

\section{An ordered pair of dissipative structures}\label{sec3}

We now relax the assumption that $a = b$ and return to the general 
case where $a,b$ are arbitrary positive constants. Assuming without loss 
of generality that $k = 1$, we write system \eqref{RD} in the equivalent
form
\begin{equation}\label{RD2}
  u_t \,=\, a u_{xx} -\rho\,, \qquad   v_t \,=\, b u_{xx} +2\rho\,,
\end{equation}
where the auxiliary quantity $\rho = u - v^2$ measures the distance to the
chemical equilibrium. 

As was already mentioned, the proof of Proposition~\ref{main2} relies on local
energy estimates and follows the general approach described in \cite{GS1}.
For the nonnegative solutions of \eqref{RD2} given by Proposition~\ref{prop:exist}, 
it is convenient to use the energy density $e(x,t)$, the energy flux $f(x,t)$, 
and the energy dissipation $d(x,t)$ given by \eqref{edf2x2}, namely
\begin{equation}\label{edf1}
  e \,=\, \frac{1}{2}\,u^2 + \frac{1}{6}\,v^3\,, \qquad
  f \,=\, a u u_x + \frac{b}{2}\,v^2 v_x\,, \qquad 
  d \,=\, a u_x^2 + b v v_x^2 + \rho^2\,.
\end{equation}
The local energy balance $\partial_t e = \partial_x f - d$ is easily verified
by a direct calculation. The main properties we shall use are the positivity of
the energy $e$ and the dissipation $d$, as well as the pointwise estimate
$f^2 \le C e d$, where $C > 0$ depends only on $a,b$. In \cite{GS1}, an evolution
equation equipped with a triple $(e,f,d)$ satisfying the above properties is
called an ``extended dissipative system''. According to that terminology, we
shall refer to the triple \eqref{edf1} as an ``EDS structure'' for system
\eqref{RD2}.

The essential step in the proof of Proposition~\ref{main2} is the construction 
of a second EDS structure $(\tilde e,\tilde f,\tilde d)$ for \eqref{RD2}, 
where the new energy density $\tilde e$ is bounded from above by a multiple 
of the energy dissipation $d$ in the first EDS structure. It is quite natural
to look for $\tilde e$ as a linear combination of the quantities $u_x^2$, 
$v v_x^2$, and $\rho^2$ that appear in the expression of $d$ in \eqref{edf1}. 

\begin{lem}\label{lem:edf2}
For all values of the parameters $\alpha,\beta > 0$ the quantities 
$\tilde e$, $\tilde f$, $\tilde d$ defined by 
\begin{equation}\label{edf2}
\begin{split}
  \tilde e \,&=\, \frac{\alpha}{2}\,u_x^2 + \frac{\beta}{2}\,v v_x^2 + 
  \frac{1}{2}\,\rho^2\,, \\ 
  \tilde f \,&=\, \alpha u_x u_t + \beta v v_x v_t - \frac{\beta b}{6}\,v_x^3
  + \frac{\beta}{2}\,\rho \rho_x\,, \\
  \tilde d \,&=\, \alpha a u_{xx}^2 + \beta b v v_{xx}^2 + (1+4v)\rho^2 
  + \frac{\beta}{2}\,\rho_x^2 -\bigl(a + \alpha - \beta/2\bigr)\rho u_{xx} 
  + 2\bigl(b + \beta/2\bigr)\rho v v_{xx}\,,
\end{split}
\end{equation}
satisfy the local energy balance $\partial_t \tilde e = \partial_x \tilde f 
- \tilde d$. 
\end{lem}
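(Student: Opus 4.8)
The plan is to verify the balance law by a direct computation: differentiate $\tilde e$ in time, substitute the evolution equations, and rearrange the outcome into a divergence $\partial_x \tilde f$ plus a remainder that I identify with $-\tilde d$. The only inputs are the equations \eqref{RD2}, namely $u_t = a u_{xx} - \rho$ and $v_t = b v_{xx} + 2\rho$, together with the induced equation for $\rho = u - v^2$, which reads $\rho_t = u_t - 2v v_t = a u_{xx} - 2 b v v_{xx} - (1+4v)\rho$. Since nonnegative solutions are smooth for $t > 0$ by Proposition~\ref{prop:exist}, all the derivatives below are legitimate and there is no regularity issue.

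First I would differentiate the three pieces of $\tilde e$ separately. For $\frac{\alpha}{2}u_x^2$, using $\partial_t u_x = \partial_x u_t$ and integrating by parts once gives $\partial_t(\frac{\alpha}{2}u_x^2) = \partial_x(\alpha u_x u_t) - \alpha u_{xx} u_t$, and substituting $u_t = a u_{xx} - \rho$ leaves $-\alpha a u_{xx}^2 + \alpha \rho u_{xx}$ outside the divergence. For $\frac{\beta}{2} v v_x^2$, the same manipulation produces the flux $\beta v v_x v_t$ minus $\frac{\beta}{2} v_x^2 v_t + \beta v v_{xx} v_t$; after inserting $v_t = b v_{xx} + 2\rho$, the purely cubic contribution $-\frac{\beta b}{2} v_x^2 v_{xx}$ is itself a total derivative, which is precisely the function of the term $-\frac{\beta b}{6} v_x^3$ in $\tilde f$, while the rest leaves $-\beta b v v_{xx}^2 - \beta \rho v_x^2 - 2\beta \rho v v_{xx}$. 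Finally, $\frac{1}{2}\rho^2$ contributes $\rho \rho_t = a\rho u_{xx} - 2 b \rho v v_{xx} - (1+4v)\rho^2$ directly.

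Summing these, the genuine divergence $\partial_x\bigl(\alpha u_x u_t + \beta v v_x v_t - \frac{\beta b}{6} v_x^3\bigr)$ is collected, and the remaining non-divergence terms are
\[
  -\alpha a u_{xx}^2 - \beta b v v_{xx}^2 - (1+4v)\rho^2 + (\alpha + a)\rho u_{xx} - 2(\beta + b)\rho v v_{xx} - \beta \rho v_x^2.
\]
To match this against $\partial_x\tilde f - \tilde d$ I must still account for the last flux term $\frac{\beta}{2}\rho\rho_x$, whose derivative is $\frac{\beta}{2}\rho_x^2 + \frac{\beta}{2}\rho\rho_{xx}$: the first piece supplies the $\frac{\beta}{2}\rho_x^2$ appearing in $\tilde d$, while the second must absorb a mismatch in the cross terms.

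The one point that needs care — and the only genuine step beyond bookkeeping — is to reconcile the coefficients of $\rho u_{xx}$ and $\rho v v_{xx}$ with the orphan term $-\beta \rho v_x^2$ and the contribution $\frac{\beta}{2}\rho\rho_{xx}$ coming from the flux. All of these are settled by a single algebraic identity, obtained by differentiating $\rho = u - v^2$ twice: $\rho_{xx} = u_{xx} - 2 v_x^2 - 2 v v_{xx}$. Indeed, the net discrepancy between my computed remainder and $-\tilde d$ works out to be exactly $\frac{\beta}{2}\rho\,\bigl(u_{xx} - 2 v v_{xx} - 2 v_x^2 - \rho_{xx}\bigr)$, which vanishes identically by this identity. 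This yields $\partial_t \tilde e = \partial_x \tilde f - \tilde d$ and completes the verification. I expect the whole argument to be routine once the evolution equation for $\rho$ and the second-derivative identity are in hand; the risk is purely clerical, in tracking signs and the coefficients $(a+\alpha-\beta/2)$ and $2(b+\beta/2)$ in the cross terms.
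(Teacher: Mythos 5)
Your proof is correct and takes essentially the same route as the paper's: the same direct time differentiation and integration by parts yielding the same intermediate remainder, with the orphan term $-\beta\rho v_x^2$ eliminated via the twice-differentiated identity $\rho_{xx} = u_{xx} - 2v_x^2 - 2v v_{xx}$ (the paper's \eqref{rhoid}) together with the rewriting $\rho\rho_{xx} = (\rho\rho_x)_x - \rho_x^2$. Your final bookkeeping is accurate — the discrepancy is indeed exactly $\frac{\beta}{2}\rho\bigl(u_{xx} - 2vv_{xx} - 2v_x^2 - \rho_{xx}\bigr) = 0$, and the coefficients $(a+\alpha-\beta/2)$ and $2(b+\beta/2)$ come out as in \eqref{edf2}.
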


\begin{proof}
Differentiating $\tilde e$ with respect to time and using \eqref{RD2}, 
we find by a direct calculation
\begin{equation}\label{prob}
\begin{split}
  \partial_t \tilde e \,&=\, \alpha u_x u_{xt} + \beta v v_x v_{xt} + 
  \frac{\beta}{2}\,v_x^2 v_t + \rho \rho_t \\ 
  \,&=\, \bigl(\alpha u_x u_t + \beta v v_x v_t\bigr)_x - \alpha u_{xx} u_t 
  - \beta v v_{xx} v_t - \frac{\beta}{2}\,v_x^2 v_t + \rho \rho_t \\ 
  \,&=\, \Bigl(\alpha u_x u_t + \beta v v_x v_t - \frac{\beta b}{6}\,v_x^3\Bigr)_x
  -\alpha a u_{xx}^2 - \beta b v v_{xx}^2 - (1+4v)\rho^2  \\[1mm] 
  & \hspace{20pt} - \beta \rho v_x^2 + (a + \alpha)\rho u_{xx} -2 (b + \beta)
  \rho v v_{xx}\,.
\end{split}
\end{equation}
The last line collects the terms which have no definite sign and cannot be 
incorporated in the flux $\tilde f$. Among them, the terms involving 
$\rho u_{xx}$ and $\rho v v_{xx}$ can be controlled by the negative terms in 
the previous line. This is not the case, however, of the term $-\beta \rho v_x^2$, 
which is potentially problematic. The trick here is to use the identity 
\begin{equation}\label{rhoid}
  \rho v_x^2 \,=\, \frac{\rho}{2}\,\bigl(u_{xx} - 2v v_{xx} - \rho_{xx}\bigr)\,, 
\end{equation}
which is easily obtained by differentiating twice the relation $\rho = u - v^2$
with respect to $x$. If we replace \eqref{rhoid} into \eqref{prob} and if we
observe in addition that $\rho \rho_{xx} = (\rho \rho_x)_x - \rho_x^2$, we
conclude that $\partial_t \tilde e = \partial_x \tilde f - \tilde d$, where
$\tilde e$, $\tilde f$, $\tilde d$ are defined in \eqref{edf2}.
\end{proof}

It remains to chose the free parameters $\alpha,\beta$ so that the dissipation
$\tilde d$ is positive. In the third line of \eqref{edf2}, the last two terms
involving $\rho u_{xx}$ and $\rho v v_{xx}$ have no definite sign, but (as 
already mentioned) we can use Young's inequality to control them in terms of 
the positive quantities $u_{xx}^2$, $v v_{xx}^2$, and $(1+4v)\rho^2$. This procedure 
works if and only if
\begin{equation}\label{alphabet}
  \bigl(a + \alpha - \beta/2\bigr)^2 \,<\, 4 a \alpha\,, \qquad \hbox{and}
  \qquad (b + \beta/2\bigr)^2 \,<\, 4 b \beta\,.
\end{equation}
It is always possible to chose $\alpha,\beta > 0$ so that both inequalities 
in \eqref{alphabet} are satisfied. A particularly simple solution is 
$\alpha = a + b$, $\beta = 2b$, which we assume henceforth. We thus find\:

\begin{cor}\label{lem:dpos}
The quantities $\tilde e$, $\tilde f$, $\tilde d$ defined by 
\begin{equation}\label{edf2bis}
\begin{split}
  \tilde e \,&=\, \frac{a+b}{2}\,u_x^2 + b\,v v_x^2 + \frac{1}{2}\,\rho^2\,, \\[-0.5mm]
  \tilde f \,&=\, (a+b) u_x u_t + 2b\,v v_x v_t - \frac{b^2}{3}\,v_x^3
  + b\,\rho \rho_x\,, \\[1mm]
  \tilde d \,&=\, a(a+b) u_{xx}^2 + 2 b^2 v v_{xx}^2 + (1+4v)\rho^2 
  + b\,\rho_x^2 -2a\rho u_{xx} + 4b\rho v v_{xx}\,,
\end{split}
\end{equation}
satisfy the local energy balance $\partial_t \tilde e = \partial_x \tilde f - 
\tilde d$, and there exists a constant $\gamma > 0$ depending only on $a,b$ such 
that
\begin{equation}\label{dpos}
  \tilde d \,\ge\, \tilde d_0 \,:=\, \gamma\Bigl(u_{xx}^2 + v v_{xx}^2 + 
  (1+4v)\rho^2\Bigr) + b\rho_x^2\,.
\end{equation}
\end{cor}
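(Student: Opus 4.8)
The local balance $\partial_t \tilde e = \partial_x \tilde f - \tilde d$ requires no new computation: the triple \eqref{edf2bis} is exactly \eqref{edf2} specialized to $\alpha = a+b$ and $\beta = 2b$, so I would simply invoke Lemma~\ref{lem:edf2}, noting that these values give $a+\alpha-\beta/2 = 2a$ and $2(b+\beta/2) = 4b$, which reproduce the two cross terms in the expression of $\tilde d$ in \eqref{edf2bis}. All the content is thus in the lower bound \eqref{dpos}.

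Since $b\rho_x^2$ occurs identically in $\tilde d$ and in $\tilde d_0$, it suffices to bound
\[
  Q \,:=\, a(a+b)u_{xx}^2 + 2b^2 v v_{xx}^2 + (1+4v)\rho^2
  - 2a\rho u_{xx} + 4b\rho v v_{xx}
\]
from below by $\gamma\bigl(u_{xx}^2 + v v_{xx}^2 + (1+4v)\rho^2\bigr)$. The idea I would use is that the two indefinite cross terms decouple: $-2a\rho u_{xx}$ is absorbed by $u_{xx}^2$ together with the constant part of the $\rho^2$ coefficient, whereas $4b\rho v v_{xx}$ is absorbed by $v v_{xx}^2$ together with the $v$-part. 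Concretely I would split $(1+4v)\rho^2 = \rho^2 + 4v\rho^2$ and write $Q = Q_1 + Q_2$ with
\[
  Q_1 \,=\, a(a+b)u_{xx}^2 - 2a\rho u_{xx} + \rho^2\,, \qquad
  Q_2 \,=\, 2b^2 v v_{xx}^2 + 4b\rho v v_{xx} + 4v\rho^2\,.
\]

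Viewed as a quadratic form in $(u_{xx},\rho)$, the form $Q_1$ is positive definite precisely when $(2a)^2 < 4a(a+b)$, which is the first inequality in \eqref{alphabet} for our parameters; hence $Q_1 \ge \gamma_1(u_{xx}^2 + \rho^2)$ with $\gamma_1 > 0$ the smaller eigenvalue of the associated matrix, depending only on $a,b$. For $Q_2$ I would invoke $v \ge 0$ (Proposition~\ref{prop:exist}) to factor $Q_2 = v\bigl(2b^2 v_{xx}^2 + 4b\rho v_{xx} + 4\rho^2\bigr)$; the inner form is positive definite exactly when $(2b)^2 < 8b^2$, which is the second inequality in \eqref{alphabet}, so $Q_2 \ge \gamma_2\,v(v_{xx}^2 + \rho^2) = \gamma_2\bigl(v v_{xx}^2 + v\rho^2\bigr)$ with $\gamma_2 > 0$ depending only on $b$. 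Adding the two estimates and comparing the coefficients of $u_{xx}^2$, $v v_{xx}^2$, $\rho^2$ and $v\rho^2$, I would conclude that \eqref{dpos} holds with $\gamma = \min(\gamma_1,\gamma_2/4)$, the factor $1/4$ being what is needed to regenerate $4v\rho^2$ inside $(1+4v)\rho^2$.

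The one genuinely delicate point --- and the step I expect to be the main obstacle if one is not careful --- is the $v$-dependence. Both $v v_{xx}^2$ and the cross term $\rho v v_{xx}$ carry a factor $v$, so a blind application of Young's inequality would leave an uncontrolled $\rho^2$ term unless one recognizes that the coefficient of $\rho^2$ is the non-constant quantity $(1+4v)$. It is exactly the $4v$ part that is consumed by $Q_2$, while the $1$ is reserved for $Q_1$; this is why the split into a $v$-free block and a $v$-weighted block works, and why the two conditions \eqref{alphabet} are precisely what is required.
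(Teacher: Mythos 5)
Your proposal is correct and follows exactly the paper's route: the balance law is Lemma~\ref{lem:edf2} specialized to $\alpha = a+b$, $\beta = 2b$, and the lower bound \eqref{dpos} is the Young-inequality absorption of the two cross terms under precisely the conditions \eqref{alphabet}, which the paper leaves implicit and you carry out explicitly. Your decomposition into the $v$-free block $Q_1$ and the $v$-weighted block $Q_2$, with the split $(1+4v)\rho^2 = \rho^2 + 4v\rho^2$ and the final choice $\gamma = \min(\gamma_1,\gamma_2/4)$, is exactly the computation the paper's phrase ``this procedure works if and only if \eqref{alphabet}'' refers to.
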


\begin{rem}\label{rem:flux}
Strictly speaking, the triple $(\tilde e,\tilde f,\tilde d)$ is not an EDS
structure in the sense of \cite{GS1}, because the flux bound $\tilde f^2 
\le C \tilde e \tilde d$ does not hold. The problem comes from the term 
involving $v_x^3$ in $\tilde f$\: it is clearly not possible to bound $v_x^6$ 
pointwise in terms of $v v_x^2$ and $v v_{xx}^2$. Nevertheless we shall see in
Section~\ref{sec4} that the contribution of that term to the localized
energy estimates can be estimated as if the pointwise bound was valid. 
This suggests that our definition of ``extended dissipative system'' given 
in \cite{GS1} may be too restrictive, and should perhaps be generalized so as 
to include more general flux terms as in the present example.  
\end{rem}

The EDS structures \eqref{edf1}, \eqref{edf2bis} provide a good control on the
quantities $v^2$, $v_x^2$, and $v_{xx}^2$ only if the function $v(x,t)$ is
bounded away from zero. As was observed in Remark~\ref{rem:lowerbd}, this is the
case in particular if $v_0(x) \ge \delta > 0$ for some $\delta > 0$. However, we
are also interested in initial data which do not have that property. In
particular we may want to consider the situation where $(u_0,v_0) = (1,0)$ 
when $x < 0$ and $(u_0,v_0) = (0,1)$ when $x > 0$. In that case, the evolution
describes the diffusive mixing of the initially separated species $\cA$, $\cB$. 

To handle the case where the second component $v(x,t)$ is not bounded away
from zero, a possibility is to add to the energy density $e(x,t)$ a small 
multiple of $w^2$, where $w = 2u + v$. 

\begin{lem}\label{lem:edf34}
If $\theta > 0$ is sufficiently small, the quantities $e_1(x,t)$, $f_1(x,t)$, 
$d_1(x,t)$ defined by 
\begin{equation}\label{edf3}
\begin{split}
  e_1 \,&=\, e \,+\, \theta w^2/2\,, \\
  f_1 \,&=\, f \,+\, \theta b w w_x + 2\theta (a-b)w u_x\,, \\
  d_1 \,&=\, d \,+\, \theta b w_x^2 + 2\theta (a-b)w_x u_x\,,
\end{split}
\end{equation}
satisfy the energy balance $\partial_t e_1 = \partial_x f_1 - d_1$, and there 
exists a constant $c > 0$ such that 
\begin{equation}\label{cbd1}
  e_1 \,\ge\, c\bigl(u^2 + (1+v)v^2\bigr)\,, \qquad
  d_1 \,\ge\, c\bigl(u_x^2 + (1+v)v_x^2 + \rho^2\bigr)\,.
\end{equation}
Similarly the quantities $\tilde e_1(x,t)$, $\tilde f_1(x,t)$, $\tilde d_1(x,t)$ 
defined by 
\begin{equation}\label{edf4}
\begin{split}
  \tilde e_1 \,=\, \tilde e \,+\, \theta w_x^2/2\,, \quad
  \tilde f_1 \,=\, \tilde f \,+\, \theta w_x w_t\,, \quad
  \tilde d_1 \,=\, \tilde d \,+\, \theta b w_{xx}^2 + 2\theta (a{-}b)w_{xx} 
  u_{xx}\,, 
\end{split}
\end{equation}
satisfy the energy balance $\partial_t \tilde e_1 = \partial_x \tilde f_1 
- \tilde d_1$ as well as the lower bounds
\begin{equation}\label{cbd2}
  \tilde e_1 \,\ge\, c\bigl(u_x^2 + (1+v)v_x^2 + \rho^2\bigr)\,, \qquad
  \tilde d_1 \,\ge\, c\bigl(u_{xx}^2 + (1+v)v_{xx}^2 + \rho_x^2 + (1+v)\rho^2
  \bigr)\,.
\end{equation}
\end{lem}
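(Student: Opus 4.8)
The plan is to treat the two triples in parallel, since both are obtained by adding to a known EDS structure a multiple of $w^2$ (respectively $w_x^2$), where $w = 2u+v$. The starting point is the evolution equation for $w$: adding twice the first equation in \eqref{RD2} to the second, the reaction terms cancel and we obtain $w_t = 2a u_{xx} + b v_{xx} = b w_{xx} + 2(a-b)u_{xx}$. This is the only place where the difference $a-b$ enters, and it explains the origin of the cross terms in \eqref{edf3}, \eqref{edf4}.

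First I would verify the two local energy balances, which is a direct computation. For the first triple, differentiating $\theta w^2/2$ in time and substituting $w_t = b w_{xx} + 2(a-b)u_{xx}$ gives, after two integrations by parts, $\theta w w_t = \partial_x\bigl(\theta b w w_x + 2\theta(a-b)w u_x\bigr) - \theta b w_x^2 - 2\theta(a-b)w_x u_x$; adding this to $\partial_t e = \partial_x f - d$ yields precisely the triple $(e_1,f_1,d_1)$ of \eqref{edf3}. For the second triple one proceeds identically with $w_x$ in place of $w$, using $w_{xt} = b w_{xxx} + 2(a-b)u_{xxx}$ and the observation that the resulting flux term equals $\theta w_x w_t = \theta b w_x w_{xx} + 2\theta(a-b)w_x u_{xx}$, which reproduces the flux and dissipation in \eqref{edf4} once added to $\partial_t \tilde e = \partial_x \tilde f - \tilde d$ from Corollary~\ref{lem:dpos}.

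The substance of the lemma lies in the lower bounds, and the key point is that the original dissipations degenerate in the pure gradient $v_x^2$ (respectively $v_{xx}^2$) as $v \to 0$, whereas the added $w_x^2$ (respectively $w_{xx}^2$) supplies exactly such a term. For the energy densities the bounds are immediate from the nonnegativity of $u,v$: since $w^2 = 4u^2 + 4uv + v^2$, every term in $e_1$ is nonnegative and $e_1 \ge \tfrac{1}{2}u^2 + \tfrac{1}{6}v^3 + \tfrac{\theta}{2}v^2 \ge c\bigl(u^2 + (1+v)v^2\bigr)$; likewise $\tilde e_1$ is bounded below after completing the square in $(u_x,v_x)$, which succeeds for \emph{any} $\theta>0$ because the relevant discriminant condition reduces to $0 < \theta(a+b)/4$.

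For the dissipations $d_1$ and $\tilde d_1$ the indefinite cross terms must be absorbed, and this is the main obstacle. Expanding $w_x^2 = 4u_x^2 + 4u_x v_x + v_x^2$ and collecting the $u_x$- and $v_x$-contributions of \eqref{edf3}, the part of $d_1$ quadratic in $(u_x,v_x)$ becomes $a(1+4\theta)u_x^2 + 2\theta(a+b)u_x v_x + \theta b\, v_x^2$; this form is positive definite, hence bounded below by $c(u_x^2+v_x^2)$, precisely when $\theta(a-b)^2 < ab$, and together with the leftover $b v v_x^2 + \rho^2$ this yields the second bound in \eqref{cbd1}. The same mechanism governs $\tilde d_1$: using $\tilde d \ge \tilde d_0$ from \eqref{dpos} to supply $\gamma u_{xx}^2$, $\gamma v v_{xx}^2$, $\gamma(1+4v)\rho^2$ and $b\rho_x^2$, the second-order form $(\gamma + 4\theta a)u_{xx}^2 + 2\theta(a+b)u_{xx}v_{xx} + \theta b\, v_{xx}^2$ is positive definite as soon as $\theta(a-b)^2 < b\gamma$, and since $1+4v \ge 1+v$ one recovers \eqref{cbd2}. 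Thus the smallness of $\theta$ is dictated by a tension: the addition must be large enough to create a nondegenerate $v_x^2$ (respectively $v_{xx}^2$) term, yet small enough that the cross term it introduces—weighted by $a-b$—does not destroy positivity. All three constraints are compatible, so choosing $\theta$ below $\min(ab,\,b\gamma)/(a-b)^2$ (any $\theta>0$ if $a=b$) makes every bound hold with a common constant $c>0$.
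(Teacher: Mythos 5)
Your proposal is correct and follows essentially the same route as the paper's proof, which is only a short sketch (verify the balances via $w_t = b w_{xx} + 2(a-b)u_{xx}$, then absorb the cross terms by Young's inequality for small $\theta$, using \eqref{dpos} for the second triple). You simply carry out that sketch in full: your expansions of the quadratic forms and the explicit smallness conditions $\theta(a-b)^2 < ab$ and $\theta(a-b)^2 < b\gamma$ (with any $\theta>0$ admissible for $\tilde e_1$, discriminant margin $\theta(a+b)/4$) all check out.
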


\begin{proof}
Since $w_t = 2a u_{xx} + b v_{xx} = b w_{xx} + 2(a-b)u_{xx}$, it is straightforward 
to verify that the additional terms involving the parameter $\theta$ in
\eqref{edf3}, \eqref{edf4} do not destroy the local energy balances. The 
lower bounds \eqref{cbd1} are easily obtained using the definitions of 
the quantities $e_1$, $d_1$ and applying Young's inequality, provided   
$\theta > 0$ is sufficiently small (depending on $a,b$). Estimates
\eqref{cbd2} are obtained similarly, using in addition the lower bound
\eqref{dpos}. 
\end{proof}

\section{Uniformly local energy estimates}\label{sec4}

In this section we complete the proof of our main result,
Proposition~\ref{main2}, using the dissipative structures introduced in
Section~\ref{sec3}. We fix the parameters $a,b > 0$, and we consider a global
solution $(u,v) \in C^0([0,+\infty),X^2)$ of system \eqref{RD2} with initial
data $(u_0,v_0) \in X_+^2$, as given by Proposition~\ref{prop:exist}.  Following
our previous works \cite{GS1,GS2}, our strategy is to control the behavior of
the solution $(u(t),v(t))$ for large times using localized energy estimates.

For convenience, we first prove Proposition~\ref{main2} under the additional 
assumption that the solution of \eqref{RD2} satisfies
\begin{equation}\label{eq:lowv}
  \inf_{t \ge 0}\, \inf_{x \in \R}\,v(x,t) \,\ge\, \delta\,, \qquad\hbox{for some}
  ~\delta > 0\,.
\end{equation}
As was observed in Remark~\ref{rem:lowerbd}, this is the case if the initial 
function $v_0(x) = v(x,0)$ is bounded away from zero. Assumption~\ref{eq:lowv} 
allows us to use the relatively simple EDS structures \eqref{edf1},
\eqref{edf2bis} instead of the more complicated ones introduced in
Lemma~\ref{lem:edf34}, and this makes the argument somewhat easier to follow.
The proof is however completely similar in the general case, see
Section~\ref{ssec44} below.

Given $\epsilon > 0$ and $x_0 \in \R$, we define the localization function
\begin{equation}\label{chidef}
  \chi(x) \,=\, \chi_{\epsilon,x_0}(x) \,:=\, \frac{1}{\cosh\bigl(
  \epsilon(x-x_0)\bigr)}\,, \qquad x \in \R\,.
\end{equation}
This function is smooth and satisfies the bounds
\begin{equation}\label{chibd}
  0 \,<\, \chi(x) \le 1\,, \qquad |\chi'(x)| \,\le\, \epsilon \chi(x)\,,
  \qquad |\chi''(x)| \,\le\,  \epsilon^2 \chi(x)\,, \qquad x \in \R\,.
\end{equation}
Note also that $\int_\R \chi(x)\dd x = \pi/\epsilon$. The translation parameter
$x_0$ plays no role in the subsequent calculations, but at the end we shall take
the supremum over $x_0 \in \R$ to obtain uniformly local estimates.  In
contrast, the dilation parameter $\epsilon > 0$ is crucial, and will
be chosen in an appropriate time-dependent way.

\subsection{The localized energy and its dissipation}\label{ssec41}

We first exploit the EDS structure \eqref{edf1}. We fix some observation 
time $T > 0$ and we consider the localized energy 
\[
  E(t) \,=\, \int_\R \chi(x)\,e(x,t)\dd x \,=\, \int_\R \chi(x) 
  \Bigl(\frac12 u(x,t)^2 + \frac16 v(x,t)^3\Bigr)\dd x\,, \qquad
  t \in [0,T]\,.
\]
Note that this quantity is well defined thanks to the localization function 
$\chi$, which is integrable. If $t > 0$, we also introduce the associated 
energy dissipation
\[
  D(t) \,=\, \int_\R \chi(x)\,d(x,t)\dd x \,=\, \int_\R \chi(x) 
  \Bigl(a u_x(x,t)^2 + b v(x,t)v_x(x,t)^2 + \rho(x,t)^2\Bigr)\dd x\,.
\]
Since the solution $(u,v)$ of the parabolic system \eqref{RD2} is 
smooth for $t > 0$, it is straightforward to verify that $E \in 
C^0([0,T]) \cap C^1((0,T))$ and that
\begin{equation}\label{Eder}
\begin{split}
  E'(t) \,=\, \int_\R \chi(x)\,\partial_t e(x,t)\dd x \,&=\,
  \int_\R \chi(x) \Bigl(\partial_x f(x,t) - d(x,t)\Bigr)\dd x \\
  \,&=\, -\int_\R \chi'(x) f(x,t)\dd x - D(t)\,.
\end{split}
\end{equation}
To bound the flux term, we use \eqref{chibd} and the pointwise estimate 
$f^2 \le C_0\,e d$, where $C_0 > 0$ depends on the parameters $a,b$. 
Applying Young's inequality, we obtain
\[
  \bigg|\int_\R \chi'(x) f(x,t)\dd x\bigg| \,\le\, \epsilon \int_\R \chi 
  \bigl(C_0\,e d\bigr)^{1/2}\dd x \,\le\, \frac12\,D(t) + \frac{C_0 
  \epsilon^2}{2}\,E(t)\,.
\] 
At this point, we choose the dilation parameter $\epsilon$ so that 
\begin{equation}\label{epsdef}
  C_0\,\epsilon^2 \,=\, \frac{1}{T}\,.
\end{equation}
We thus obtain the differential inequality $E'(t) \le -\frac12 D(t) + 
\frac{1}{2T}E(t)$, which can be integrated using Gr\"onwall's lemma to 
give the useful estimate
\begin{equation}\label{Eineq}
  E(T) + \frac12 \int_0^T D(t)\dd t \,\le\, e^{1/2}\, E(0)\,.
\end{equation}

Next, we introduce integrated quantities related to the second EDS structure 
\eqref{edf2bis}. For all $t \in (0,T)$ we define
\begin{align*}
  \tilde E(t) \,&=\, \int_\R \chi(x)\,\tilde e(x,t)\dd x \,=\,
  \int_\R \chi(x) \Bigl(\frac{a{+}b}{2}\,u_x^2 + bv v_x^2 + \frac{1}{2}\,
  \rho^2\Bigr)(x,t)\dd x\,, \\
  \tilde D(t) \,&=\, \int_\R \chi(x)\,\tilde d_0(x,t)\dd x \,=\,  
  \int_\R \chi(x) \Bigl(\gamma u_{xx}^2 + \gamma v v_{xx}^2 
  + \gamma (1{+}4v)\rho^2 + b \rho_x^2\Bigr)(x,t) \dd x\,,
\end{align*}
where $\gamma > 0$ is as in Corollary~\ref{lem:dpos}. The same calculation as
in \eqref{Eder} leads to
\[
  \tilde E'(t) \,=\, -\int_\R \chi'(x) \tilde f(x,t)\dd x - \int_\R \chi(x)
  \,\tilde d(x,t)\dd x \,\le\, -\int_\R \chi'(x) \tilde f(x,t)\dd x - 
  \tilde D(t)\,,
\]
where the inequality follows from \eqref{dpos}. The difficulty here is that 
the flux term does not satisfy a pointwise estimate of the form $\tilde f^2
\le C\,\tilde e \tilde d_0$, see Remark~\ref{rem:flux}. However, we can 
decompose
\[
  \tilde f \,=\, \tilde f_0 - \frac{b^2}{3}\,v_x^3\,, \qquad 
  \hbox{where} \quad \tilde f_0 \,=\, (a{+}b)u_x u_t + 2b v v_x v_t 
  + b\rho \rho_x\,,
\]
and it is easy to check that $\tilde f_0^2\le C_1\,\tilde e \tilde d_0$ for some 
$C_1 > 0$. In particular, we find as before
\begin{equation}\label{flux1}
  \bigg|\int_\R \chi'(x) \tilde f_0(x,t)\dd x\bigg| \,\le\, \epsilon \int_\R 
  \chi \bigl(C_1\,\tilde e \tilde d_0\bigr)^{1/2}\dd x \,\le\, \frac14\,\tilde D(t) 
  + C_1 \epsilon^2\,\tilde E(t)\,.
\end{equation}
As for the term involving $v_x^3$, we integrate by parts to obtain the 
identity
\[
  \int_\R \chi' v_x^3 \dd x \,=\, -\int_\R \chi'' v v_x^2 \dd x - 
  2 \int_\R \chi' v v_x v_{xx} \dd x\,.
\]
Using \eqref{chibd} and Young's inequality, we deduce
\begin{equation}\label{flux2}
  \frac{b^2}{3}\,\bigg|\int_\R \chi'(x) v_x^3(x,t)\dd x\bigg| \,\le\, 
  \frac{b \epsilon^2}{3}\,\tilde E(t) +  \frac14\,\tilde D(t) + 
  \frac{4b^3 \epsilon^2}{9\gamma}\,\tilde E(t)\,.
\end{equation}
The combination of \eqref{flux1}, \eqref{flux2} gives the desired 
estimate on the flux term\:
\[
  \bigg|\int_\R \chi'(x) \tilde f(x,t)\dd x\bigg| \,\le\, \frac12\, 
  \tilde D(t) + C_2 \epsilon^2\,\tilde E(t)\,, \qquad \hbox{where}\quad 
  C_2 \,=\, C_1 + \frac{b}{3} + \frac{4b^3}{9\gamma}\,.
\]
Integrating the differential inequality $\tilde E'(t) \le -\frac12 \tilde 
D(t) + C_2 \epsilon^2 E(t)$ over the time interval $[t_0,T]$, where 
$t_0 \in [0,T]$, we arrive at the estimate
\begin{equation}\label{tEineq}
  \tilde E(T) + \frac12 \int_{t_0}^T \tilde D(t)\dd t \,\le\, C_3 \tilde 
  E(t_0)\,, \qquad t_0 \in [0,T]\,,
\end{equation}
where $C_3 = \exp(C_2 \epsilon^2 T) = \exp(C_2/C_0)$.  

Finally, we use the crucial fact that the EDS structures \eqref{edf1}, 
\eqref{edf2bis} are {\em ordered}, in the sense that 
\[
  \tilde e(x,t) \,\le\, C_4\,d(x,t)\,, \qquad \hbox{where}\qquad
  C_4 \,=\, \max\Bigl(1,\frac{a+b}{2a}\Bigr)\,.
\]
In particular, the inequality $\tilde E(t) \le C_4 D(t)$ holds for all 
$t \in (0,T)$. Thus, if we average \eqref{tEineq} over $t_0 \in [0,T]$ and use 
\eqref{Eineq}, we obtain
\begin{equation}\label{tEineq2}
  \tilde E(T) + \frac{1}{2T} \int_0^T t \tilde D(t)\dd t \,\le\, \frac{C_3}{T}\,
  \int_0^T \tilde E(t_0)\dd t_0 \,\le\, \frac{C_3 C_4}{T}\,\int_0^T D(t)\dd t 
  \,\le\, \frac{C_5}{T}\,E(0)\,,
\end{equation}
where the constant $C_5 = 2e^{1/2}\,C_3 C_4$ only depends on the parameters
$a,b$ in system~\eqref{RD2}, and is in particular independent of the observation
time $T > 0$ and of the solution $(u,v)$ under consideration. It is however
important to keep in mind that all integrated quantities $E, \tilde E$ and
$D, \tilde D$ depend implicitly on $T$ through the weight function \eqref{chidef}
and the choice \eqref{epsdef} of the parameter $\epsilon$. 

The bound \eqref{tEineq2} summarizes the information we can obtain from the EDS
structures \eqref{edf1}, \eqref{edf2bis}. It serves as a basis for all estimates
we shall derive on the solutions of \eqref{RD2} for large times. A typical 
application of \eqref{tEineq2} is\: 

\begin{lem}\label{lem:first}
There exist a constant $C_6 > 0$ depending only the parameters $a,b$ 
such that, for any solution $(u,v) \in C^0([0,+\infty),X^2)$ of 
\eqref{RD2} with initial data $(u_0,v_0) \in X_+^2$ and any $T > 0$, 
the following inequality holds\:
\begin{equation}\label{decay1}
  \sup_{x_0 \in \R} \,\int_{I(x_0,T)} \Bigl(u_x^2 + vv_x^2 + \rho^2\Bigr)
  (x,T)\dd x \,\le\, C_6\,R^3\,T^{-1/2}\,,
\end{equation}
where $I(x_0,T) = \bigl\{x \in \R\,;\,|x-x_0| \le (C_0 T)^{1/2}\bigr\}$ 
and $R = 1 + \|u_0\|_{L^\infty} + \|v_0\|_{L^\infty}$. 
\end{lem}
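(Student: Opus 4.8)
The plan is to read the conclusion directly off the master inequality \eqref{tEineq2}, which already packages all the work done with the two EDS structures; what remains is to bound the right-hand side $E(0)$ in terms of $R$ and $T$, and to convert the global quantity $\tilde E(T)$ into the advertised localized integral over $I(x_0,T)$. The one point requiring care is the $T$-dependence: the weight $\chi$ is chosen $T$-dependent through \eqref{epsdef}, so $\int_\R \chi(x)\,\D x = \pi/\epsilon = \pi(C_0 T)^{1/2}$ \emph{grows} like $T^{1/2}$, and it is this growth, set against the prefactor $1/T$ in \eqref{tEineq2}, that produces the sharp rate $T^{-1/2}$. One must therefore track the exact scaling of $\int_\R \chi(x)\,\D x$ rather than bound it by a constant.

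First I would bound the initial localized energy. By Proposition~\ref{prop:exist} the data satisfy $\|u_0\|_{L^\infty} \le R$ and $\|v_0\|_{L^\infty} \le R$, so, since $R \ge 1$,
\[
  e(x,0) \,=\, \tfrac12\,u_0^2 + \tfrac16\,v_0^3 \,\le\, \tfrac12\,R^2 + \tfrac16\,R^3 \,\le\, R^3\,.
\]
Recalling $\epsilon = (C_0 T)^{-1/2}$ from \eqref{epsdef}, this gives
\[
  E(0) \,=\, \int_\R \chi(x)\,e(x,0)\,\D x \,\le\, R^3 \int_\R \chi(x)\,\D x \,=\, \pi(C_0 T)^{1/2}\,R^3\,.
\]
Substituting into \eqref{tEineq2} and discarding the nonnegative dissipation integral yields $\tilde E(T) \le C_5\,E(0)/T \le C\,R^3\,T^{-1/2}$, with $C$ depending only on $a,b$.

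Next I would localize. On $I(x_0,T)$ we have $\epsilon|x - x_0| \le 1$, whence $\chi(x) = 1/\cosh(\epsilon(x-x_0)) \ge 1/\cosh(1) =: c_* > 0$. Since the integrand $\tilde e = \tfrac{a+b}{2}\,u_x^2 + b\,v v_x^2 + \tfrac12\,\rho^2$ is a positive combination of $u_x^2$, $vv_x^2$ and $\rho^2$, restricting the global positive integral defining $\tilde E(T)$ to $I(x_0,T)$ and using the lower bound on $\chi$ there gives
\[
  \int_{I(x_0,T)}\Bigl(u_x^2 + v v_x^2 + \rho^2\Bigr)(x,T)\,\D x \,\le\, \frac{1}{c_*\,\min\!\bigl(\tfrac{a+b}{2},b,\tfrac12\bigr)}\,\tilde E(T) \,\le\, C_6\,R^3\,T^{-1/2}\,.
\]
None of the constants depend on $x_0$, which enters \eqref{chidef} only as a translation, so taking the supremum over $x_0 \in \R$ yields \eqref{decay1}. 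There is no genuine obstacle beyond the scaling bookkeeping already noted; everything else follows from the positivity of the integrands and the uniform lower bound $\chi \ge c_*$ on $I(x_0,T)$.
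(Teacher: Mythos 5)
Your proof is correct and follows essentially the same route as the paper: bound $E(0)$ by $\pi R^3 (C_0 T)^{1/2}$ using $\int_\R \chi\,\D x = \pi/\epsilon$, feed this into \eqref{tEineq2}, and localize via the pointwise lower bounds $\tilde e \ge \gamma_1 (u_x^2 + v v_x^2 + \rho^2)$ and $\chi \ge c_* > 0$ on $I(x_0,T)$ before taking the supremum over $x_0$. The only cosmetic difference is your constant $c_* = 1/\cosh(1)$ where the paper uses $e^{-1}$; both are valid since $\cosh(1) < e$.
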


\begin{proof}
The initial energy density satisfies $e(x,0) \le \frac12 \|u_0\|_{L^\infty}^2 + 
\frac16 \|v_0\|_{L^\infty}^2 \le R^3$, so that
\begin{equation}\label{E0bd}
  E(0) \,=\, \int_\R \chi(x)\,e(x,0)\dd x \,\le\, R^3 \int_\R \chi(x)\dd x
  \,=\, \frac{\pi R^3}{\epsilon} \,=\, \pi R^3(C_0 T)^{1/2}\,.
\end{equation}
On the other hand, we have the lower bound $\tilde e \ge \gamma_1 (u_x^2 + 
v v_x^2 + \rho^2)$ for some constant $\gamma_1 > 0$, and it follows from
\eqref{chidef} that $\chi(x) \ge e^{-1}$ when $|x-x_0| \le \epsilon^{-1} = 
(C_0 T)^{1/2}$. We thus find
\[
  \tilde E(T) \,=\, \int_\R \chi(x)\,\tilde e(x,T)\dd x \,\ge\, 
  \gamma_1 e^{-1} \int_{I(x_0,T)} \Bigl(u_x^2 + vv_x^2 + \rho^2\Bigr)
  (x,T)\dd x\,.
\]
Applying \eqref{tEineq2} we deduce that
\[
  \int_{I(x_0,T)} \Bigl(u_x^2 + vv_x^2 + \rho^2\Bigr)
  (x,T)\dd x \,\le\, \frac{e\,C_5}{\gamma_1T}\, \pi R^3(C_0 T)^{1/2}\,,
\]
and taking the supremum over $x_0 \in \R$ in the left-hand side we 
arrive at \eqref{decay1}. 
\end{proof}

\begin{rem}\label{rem:decay1}
If $1 \le p < \infty$, the uniformly local space $L^p_\ul(\R)$ is defined 
as the set of all measurable functions $f : \R \to \R$ such that
\[
  \|f\|_{L^p_\ul} \,:=\, \biggl(\,\sup_{x_0 \in \R}\,\int_{|x-x_0|\le 1}|f(x)|^p \dd x
  \biggr)^{1/p} \,<\, \infty\,,
\]
see \cite{ABCD} for a nice review article on uniformly local spaces. In view 
of \eqref{eq:lowv}, the bound \eqref{decay1} implies that $\|u_x(t)\|_{L^2_\ul} + 
\|v_x(t)\|_{L^2_\ul} + \|\rho(t)\|_{L^2_\ul} \le CR^{3/2}t^{-1/4}$ for all $t \ge 1$. 
This estimate is far from optimal, but it already implies that the solution 
$(u,v)$ converges uniformly on compact sets to the family $\cE$ of spatially 
homogeneous equilibria, which is a nontrivial result. Using the smoothing
properties of the parabolic system \eqref{RD2}, it is possible to deduce 
analogous estimates in the uniform norm, in particular
\begin{equation}\label{uxvxbd}
  \|u_x(t)\|_{L^\infty} + \|v_x(t)\|_{L^\infty} \,\le\, \frac{C R^{7/4}}{t^{1/4}}\,,
  \qquad t \ge 2\,, 
\end{equation}
see Section~\ref{ssec43} below.  Note also that the optimal decay rates
for $u_x, v_x$ given by Proposition~\ref{main1} (in the particular case $a = b$)
indicate that the left-hand side of \eqref{decay1} indeed decays like $T^{-1/2}$
as $T \to +\infty$, so that \eqref{decay1} is not far from optimal. 
\end{rem}

\subsection{Control of the second order derivatives}\label{ssec42}

So far we only used the first term $\tilde E(T)$ in the left-hand side of
inequality \eqref{tEineq2}, but the integral term involving $\tilde D(t)$ is
also valuable. In particular, the bounds \eqref{tEineq2}, \eqref{E0bd} together
imply that $\tilde D(t) \le CR^3T^{-3/2}$ for ``most'' times $t$ in the interval
$[0,T]$, but that information is difficult to exploit because the exceptional
times where such a bound possibly fails may depend on the translation
parameter $x_0 \in \R$. This difficulty is inherent to our approach, and to avoid
it we extract from \eqref{tEineq2} a somewhat weaker estimate which is valid for
all times.

To do that, we first study the linear parabolic system
\begin{equation}\label{UVsys}
  U_t \,=\, a U_{xx} + 2v V - U\,, \qquad
  V_t \,=\, b V_{xx} + 2U - 4v V\,,
\end{equation}
which is obtained by differentiating \eqref{RD2} (where $k = 1$) with respect 
to the space coordinate $x$ or the time variable $t$. In the analysis of
\eqref{UVsys}, we consider the nonnegative function $v(x,t)$ as given,
independently of the solution $(U,V)$. The property we need is\:

\begin{lem}\label{lem:deriv}
There exists a constant $C_7 > 0$ depending only on the parameters $a,b$
such that, for any $v \in C^0([0,T],X_+)$ and any initial data $(U_1,V_1) \in X^2$ 
at time $t_1 \in [0,T]$, the solution $(U,V) \in C^0([t_1,T],X^2)$ of 
\eqref{UVsys} satisfies
\begin{equation}\label{derivbd}
  \int_\R \chi(x) \Bigl(2 |U(x,T)| + |V(x,T)|\Bigr)\dd x \,\le\, 
  C_7 \int_\R \chi(x) \Bigl(2 |U_1(x)| + |V_1(x)|\Bigr)\dd x\,,
\end{equation}
where $\chi$ is given by \eqref{chidef} with $\epsilon > 0$ as in 
\eqref{epsdef}. 
\end{lem}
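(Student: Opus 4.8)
The statement asks for an $L^1$-contraction estimate (with respect to the weight $\chi$) for the linear system \eqref{UVsys}, where the positivity of $v$ is the essential structural feature. My plan is to exploit the fact that \eqref{UVsys} is a \emph{cooperative} linear system: the off-diagonal coupling coefficients $2v$ and $2$ are both nonnegative (because $v \ge 0$). The weighted combination $2|U| + |V|$ that appears in \eqref{derivbd} is not accidental: the coefficients $2$ and $1$ are exactly the left eigenvector of the reaction matrix $\begin{pmatrix} -1 & 2v \\ 2 & -4v\end{pmatrix}$ corresponding to the eigenvalue $0$, reflecting the conservation law $w = 2u + v$ that underlies the whole problem. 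The combination $2U + V$ should therefore satisfy a clean scalar equation, and I expect the reaction terms to cancel favorably when one estimates $2|U| + |V|$.

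First I would set up the weighted quantity $\Phi(t) = \int_\R \chi(x)\bigl(2|U(x,t)| + |V(x,t)|\bigr)\dd x$ and differentiate it in time, using Kato's inequality $\partial_t|U| \le \mathrm{sgn}(U)\,U_t$ (and similarly for $V$) to handle the non-smoothness of the absolute value; rigorously this is done by regularizing $|\cdot|$ or working with the smooth positive-time solution and a standard approximation. Substituting the equations \eqref{UVsys}, the diffusion terms $aU_{xx}$, $bV_{xx}$ contribute, after integrating by parts against $\chi$, terms controlled by $|\chi''|$ or $|\chi'|$, hence by $\epsilon^2 \chi$ or $\epsilon\chi$ via \eqref{chibd}. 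The reaction terms give $\mathrm{sgn}(U)(2vV - U)$ and $\mathrm{sgn}(V)(2U - 4vV)$; forming the combination $2\,\mathrm{sgn}(U)(2vV-U) + \mathrm{sgn}(V)(2U - 4vV)$ and bounding $\mathrm{sgn}(U)V \le |V|$, $\mathrm{sgn}(V)U \le |U|$, one checks that the potentially dangerous terms cancel or have a sign: the $-2|U|$ and $-4v|V|$ diagonal terms are dissipative, while the cross terms $4v\,\mathrm{sgn}(U)V \le 4v|V|$ and $2\,\mathrm{sgn}(V)U \le 2|U|$ are exactly absorbed. This is where the choice of weights $2$ and $1$ pays off, and it is the heart of the estimate.

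Collecting these computations yields a differential inequality of the form $\Phi'(t) \le C\epsilon^2\,\Phi(t)$, where $C$ depends only on $a,b$ (the diffusion terms produce $\epsilon^2\chi$ and $\epsilon\chi$ contributions, the latter absorbed by Young's inequality into the dissipative diagonal reaction terms plus an $\epsilon^2\chi$ remainder). Integrating from $t_1$ to $T$ and recalling $\epsilon^2 = 1/(C_0 T)$ from \eqref{epsdef}, the exponent is $C\epsilon^2(T - t_1) \le C/C_0$, which is bounded independently of $T$ and of $t_1 \in [0,T]$. Gr\"onwall's lemma then gives $\Phi(T) \le e^{C/C_0}\,\Phi(t_1)$, which is precisely \eqref{derivbd} with $C_7 = e^{C/C_0}$.

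The main obstacle I anticipate is controlling the first-order diffusion contribution $-\int \chi'(a\,\mathrm{sgn}(U)U_x + \dots)\dd x$ after integrating by parts: one must be careful that the sign function's derivative (a distributional term supported on the zero set of $U$) does not produce an uncontrolled contribution. The standard remedy is to note that $\mathrm{sgn}(U)U_{xx}$ integrated against $\chi$, after one integration by parts, gives $-\int \chi' \,\mathrm{sgn}(U)U_x - \int \chi\,\delta_{\{U=0\}}U_x^2$, and the latter term has the \emph{favorable} (dissipative) sign, so it can simply be discarded. This regularization step is routine but requires the smoothness of $(U,V)$ for $t > 0$ and a limiting argument as $t_1 \to 0$ if one wants the estimate down to $t_1 = 0$; I would carry it out via the mollified absolute value $|z|_\eta = \sqrt{z^2 + \eta^2}$ and let $\eta \to 0$ at the end.
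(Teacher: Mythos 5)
Your proof is correct, and it reaches the same differential inequality as the paper, but by a different mechanism for handling the absolute values. The paper first invokes the component-wise comparison principle for the cooperative system \eqref{UVsys} (cooperative precisely because $v \ge 0$) to dominate $(|U|,|V|)$ by the nonnegative solution with data $(|U_1|,|V_1|)$; for nonnegative solutions the reaction terms in $\frac{\D}{\D t}\int\chi(2U+V)\dd x$ cancel \emph{identically}, leaving only $\int \chi''(2aU+bV)\dd x \le \epsilon^2 \max(a,b)\int\chi(2U+V)\dd x$, and Gr\"onwall gives $C_7 = \exp(\max(a,b)/C_0)$. You instead keep the absolute values and use Kato's inequality: your sign computation $4v\,\mathrm{sgn}(U)V - 2|U| + 2\,\mathrm{sgn}(V)U - 4v|V| \le 0$ is exactly right (note that $v\ge 0$ enters here, playing the same role cooperativity plays in the paper), and your treatment of the diffusion terms via the mollified modulus $|z|_\eta$, discarding the favorable term $-\eta^2\int\chi\, U_x^2/|U|_\eta^3$ and integrating by parts a second time to produce $\int\chi''|U|_\eta$, is the standard and correct way to make this rigorous. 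Both routes hinge on the same algebraic fact, that $(2,1)$ is the left null vector of the reaction matrix, and both yield a $T$-independent constant via $\epsilon^2 T = 1/C_0$. Your version is somewhat more self-contained (no appeal to the comparison principle for nonautonomous cooperative systems) at the cost of the Kato regularization; the paper's version makes the cancellation exact rather than an inequality. One small inaccuracy in your write-up: you suggest the diffusion terms leave an $\epsilon\chi$-type contribution to be absorbed by Young's inequality into the ``dissipative diagonal reaction terms'' --- but after your cross-term estimates the reaction contribution is exactly zero, so no dissipation is available there, and indeed none is needed: the double integration by parts you describe in your final paragraph converts everything into $\chi''$ terms bounded by $\epsilon^2\chi$, which is the argument that actually closes.
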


\begin{proof}
Since the function $v(x,t)$ is nonnegative, the linear system \eqref{UVsys} is 
cooperative, so that a (component-wise) comparison principle holds as for 
the original system \eqref{RD}. In particular, the solution $(U,V)$ satisfies
the estimates $|U(x,t)| \le \tilde U(x,t)$ and $|V(x,t)| \le \tilde V(x,t)$, 
where $(\tilde U,\tilde V)$ denotes the solution of \eqref{UVsys} with initial 
data $(|U_1|,|V_1|)$ at time $t_1$. In other words, it is sufficient to prove 
\eqref{derivbd} for nonnegative initial data $(U_1,V_1)$, in which case the 
solution $(U,V)$ remains nonnegative by the maximum principle. 

Fix $t_1 \in [0,T]$, $(U_1,V_1) \in X_+^2$, and let $(U,V) \in C^0([t_1,T],X^2)$
be the solution of \eqref{UVsys} such that $(U(t_1),V(t_1)) = (U_1,V_1)$. 
Integrating by parts and using \eqref{chibd}, we easily find
\begin{align*}
  \frac{\D}{\D t}\int_\R \chi \bigl(2U + V)\dd x \,&=\, \int_\R \chi 
  \bigl(2a U_{xx} + b V_{xx}\bigr)\dd x \\
  \,&=\, \int_\R \chi'' \bigl(2a U + b V\bigr)\dd x \,\le\, 
  \epsilon^2 c \int_\R \chi\bigl(2U + V\bigr)\dd x\,,
\end{align*}
where $c = \max(a,b)$. This differential inequality is then integrated on the time 
interval $[t_1,T]$ to give \eqref{derivbd} with $C_7 = \exp(c\epsilon^2 T) = 
\exp(c/C_0)$. 
\end{proof}

Returning to the nonlinear system~\eqref{RD2}, we apply Lemma~\ref{lem:deriv} 
to estimate first the time derivatives $u_t, v_t$, and then the quantity 
$\rho = u - v^2$. 

\begin{lem}\label{lem:uvrho}
Under the assumption~\eqref{eq:lowv}, any solution $(u,v) \in C^0([0,+\infty),X^2)$ of 
\eqref{RD2} with initial data $(u_0,v_0) \in X_+^2$ satisfies, for any $T > 0$, 
\begin{equation}\label{decay4}
  \sup_{x_0 \in \R} \,\int_{I(x_0,T)} \Bigl(|u_{xx}| + |v_{xx}|  + |\rho|\Bigr)
  (x,T)\dd x \,\le\, C_8\,R^3\,T^{-1/2}\,,
\end{equation}
where $I(x_0,T)$ and $R$ are as in Lemma~\ref{lem:first}, and the constant 
$C_8 > 0$ depends only on $a,b,\delta$. 
\end{lem}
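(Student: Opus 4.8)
The plan is to combine Lemma~\ref{lem:deriv}, applied to the pair $(U,V) = (u_t,v_t)$, with the integrated dissipation bound \eqref{tEineq2}. Differentiating system~\eqref{RD2} in time shows that $(u_t,v_t)$ solves the linear system~\eqref{UVsys}, so Lemma~\ref{lem:deriv} is available. The chain of estimates I would set up at a fixed $x_0$ is: (i) extract from \eqref{tEineq2} an intermediate time $t_1 \in [T/2,T]$ at which the weighted dissipation $\tilde D(t_1)$ is small; (ii) bound $\int_\R \chi\,(2|u_t|+|v_t|)(t_1)\dd x$ in terms of $\tilde D(t_1)^{1/2}$; (iii) propagate this weighted $L^1$ bound from $t_1$ to $T$ by Lemma~\ref{lem:deriv}; (iv) recover $u_{xx},v_{xx}$ at time $T$ from the algebraic relations $a u_{xx} = u_t + \rho$ and $b v_{xx} = v_t - 2\rho$; and (v) close the argument with a separate bound on $\int_\R\chi|\rho|(T)\dd x$. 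Once $\int_\R\chi\,(|u_{xx}|+|v_{xx}|+|\rho|)(T)\dd x \le C R^3 T^{-1/2}$ is established, the pointwise lower bound $\chi \ge e^{-1}$ on $I(x_0,T)$ and the supremum over $x_0\in\R$ give \eqref{decay4}, exactly as in the proof of Lemma~\ref{lem:first}; all constants remain uniform in $x_0$.

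For step~(i) I would use that \eqref{tEineq2} together with \eqref{E0bd} yields $\int_0^T t\,\tilde D(t)\dd t \le C R^3 T^{1/2}$, so a mean-value argument on $[T/2,T]$ produces a time $t_1$ with $\tilde D(t_1) \le C R^3 T^{-3/2}$. For step~(ii), writing $u_t = au_{xx}-\rho$ and $v_t = bv_{xx}+2\rho$ and applying Cauchy--Schwarz against the integrable weight, each of $\int\chi|u_{xx}|$, $\int\chi|v_{xx}|$, $\int\chi|\rho|$ at time $t_1$ is bounded by a constant times $(\pi/\epsilon)^{1/2}\tilde D(t_1)^{1/2}$; here the lower bound $v\ge\delta$ from \eqref{eq:lowv} is precisely what lets me pass from $v v_{xx}^2$ to $v_{xx}^2$ in the dissipation $\tilde d_0$ of \eqref{dpos}. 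With $\epsilon = (C_0T)^{-1/2}$ this gives $\int\chi\,(2|u_t|+|v_t|)(t_1) \le C R^{3/2}T^{-1/2}$, and Lemma~\ref{lem:deriv} transports the same bound, up to the constant $C_7$, to time $T$, completing steps~(ii)--(iii) (and, since $R\ge1$, these powers of $R$ are dominated by $R^3$).

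The delicate point is step~(v), the bound on $\rho$ at the single time $T$, and I expect it to be the main obstacle. The naive route---estimating $\frac{\D}{\D t}\int\chi|\rho|$ along the evolution $\rho_t = a u_{xx} - 2b v v_{xx} - (1+4v)\rho$---fails, because substituting $a u_{xx} = u_t+\rho$ and $b v_{xx} = v_t-2\rho$ makes the dissipative term $-(1+4v)\rho$ cancel exactly, reducing the inequality to the tautology $\rho_t = u_t - 2v v_t$; its weighted $L^1$ bound integrated over $[t_1,T]$ grows like $T^{1/2}$ instead of decaying. Instead I would exploit the coercivity of the reaction in a pointwise-in-time, elliptic fashion. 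Differentiating $\rho = u - v^2$ twice in $x$ and substituting $u_{xx}=(u_t+\rho)/a$, $v_{xx}=(v_t-2\rho)/b$ produces the identity
\[
  -\rho_{xx} + \Bigl(\frac1a + \frac{4v}{b}\Bigr)\rho \,=\, g\,, \qquad
  g \,:=\, -\frac{u_t}{a} + \frac{2v v_t}{b} + 2v_x^2\,,
\]
in which the unknown $\rho$ no longer appears on the right-hand side. Multiplying by $\chi\,\mathrm{sign}(\rho)$, integrating, and using Kato's inequality $\mathrm{sign}(\rho)\rho_{xx} \le (|\rho|)_{xx}$ together with $|\chi''|\le\epsilon^2\chi$, the coefficient $\tfrac1a + \tfrac{4v}{b} \ge \tfrac1a$ absorbs the error term $\epsilon^2\int\chi|\rho|$ for $T$ large, yielding $\int\chi|\rho|(T) \le 2a\int\chi|g|(T)$.

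Finally, $\int\chi|g|(T)$ is controlled by the $u_t,v_t$ bound from step~(iii) and by $\int\chi v_x^2(T) \le \delta^{-1}\int\chi v v_x^2(T) \le C\,\tilde E(T) \le C R^3 T^{-1/2}$, the last inequalities again using \eqref{eq:lowv} and \eqref{tEineq2}; this is where the factor $R^3$ in \eqref{decay4} originates. Combining with step~(iv) then gives $\int\chi\,(|u_{xx}|+|v_{xx}|+|\rho|)(T)\dd x \le C R^3 T^{-1/2}$, as required. The only regime not covered by the large-$T$ absorption in the Kato step is small $T$ (of order $a/C_0$), where \eqref{decay4} instead follows directly from parabolic smoothing bounds for $u_{xx},v_{xx},\rho$, since $|I(x_0,T)|\sim T^{1/2}$ while these second-order quantities are at worst of order $T^{-1}$ on that time scale.
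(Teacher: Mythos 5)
Your steps (i)--(iv) are exactly the paper's argument: the mean-value extraction of $t_1\in[T/2,T]$ from \eqref{tEineq2}, the Cauchy--Schwarz bound $\int\chi\,(2|u_t|+|v_t|)(t_1)\le C R^{3/2}T^{-1/2}$ (using \eqref{eq:lowv} to pass from $v v_{xx}^2$ to $v_{xx}^2$ in $\tilde d_0$), the transport to time $T$ via Lemma~\ref{lem:deriv}, and the algebraic recovery of $u_{xx},v_{xx}$ from $u_t,v_t,\rho$. The genuine divergence is step (v). The paper controls $\int\chi|\rho|(T)$ \emph{parabolically}: it derives the closed equation \eqref{rhoeq}, $\rho_t=a\rho_{xx}-(1+4rv)\rho+2(r{-}1)v v_t+2a v_x^2$ with $r=a/b$, dominates $|\rho|$ by a damped heat equation with forcing $|v v_t|+v_x^2$, and integrates $\frac{\D}{\D t}\int\chi\bar\rho\le(a\epsilon^2-1)\int\chi\bar\rho+c\int\chi(|v v_t|+v_x^2)$ over $[0,T]$, invoking \eqref{decay1}, \eqref{decay2} at all intermediate times $t$. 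This shows, incidentally, that your diagnosis that the evolution route ``fails'' is too pessimistic: the exact cancellation you exhibit occurs only if one eliminates \emph{both} $a u_{xx}=u_t+\rho$ and $b v_{xx}=v_t-2\rho$ at once; extracting $a\rho_{xx}$ first and substituting only in the residual term $2(a{-}b)v v_{xx}$ preserves the full damping $-(1+4rv)\rho\le-\rho$, and the Gr\"onwall scheme goes through for all $T>0$ at once. Your elliptic alternative --- the identity $-\rho_{xx}+\bigl(\tfrac1a+\tfrac{4v}{b}\bigr)\rho=g$ with $g=-\tfrac{u_t}{a}+\tfrac{2v v_t}{b}+2v_x^2$, tested against $\chi\,\mathrm{sign}(\rho)$ with Kato's inequality and $|\chi''|\le\epsilon^2\chi$ --- is correct (one may regularize $|\rho|$ by $\sqrt{\rho^2+\eta^2}$ to justify the Kato step), and it buys something real: it needs information only at the single time $T$, so it avoids using \eqref{decay1}, \eqref{decay2} at intermediate times $t<T$, where the weight is at scale $\epsilon(T)$ while those bounds are native to scale $\epsilon(t)$ --- a mismatch the paper silently absorbs (harmlessly, since the kernel $e^{-(T-t)}$ concentrates at $t\approx T$). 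The price is the excluded regime $T\lesssim a/C_0$, which the paper's damping argument covers uniformly and which you correctly dispatch by parabolic smoothing, the powers of $R\ge1$ being generous enough there. Your bounds $\int\chi|v v_t|(T)\le CR^{5/2}T^{-1/2}$ (via \eqref{uvbound}) and $\int\chi v_x^2(T)\le\delta^{-1}b^{-1}\tilde E(T)\le CR^3T^{-1/2}$ (via \eqref{tEineq2}, \eqref{E0bd}) are right, and the resulting constant has the same dependence on $a,b,\delta$ as the paper's $C_8$.
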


\begin{proof}
We start from inequality \eqref{tEineq2}, and we choose a time $t_1 \in [T/2,T]$ 
where the continuous function $t \mapsto t\tilde D(t)$ reaches its minimum 
over the interval $[T/2,T]$. We then have
\begin{equation}\label{tD}
  \frac{T}{8}\,\tilde D(t_1) \,\le\, \frac{1}{2T} \int_0^T t \tilde D(t)\dd t
 \,\le\, \frac{C_5}{T}\, E(0)\,, \qquad \hbox{hence} \quad 
 \tilde D(t_1) \,\le\, \frac{8C_5}{T^2}\, E(0)\,.
\end{equation}
We recall that $\tilde D = \int_\R \chi \tilde d_0 \dd x$, where $\tilde d_0$ 
is defined in \eqref{dpos}. Under assumption \eqref{eq:lowv}, there exists 
a constant $\gamma_2 > 0$ (depending only on $a,b,\delta$) such that
$\tilde d_0 \ge \gamma_2 (u_t^2 + v_t^2)$. Therefore, using H\"older's inequality 
and estimate \eqref{tD}, we find 
\begin{align*}
  \int_\R \chi \bigl(2|u_t(t_1)| + |v_t(t_1)|\bigr)\dd x \,&\le\, 
  \biggl(\int_\R \chi\dd x\biggr)^{1/2} \biggl(5\int_\R \chi \bigl(u_t(t_1)^2 
  + v_t(t_1)^2\bigr)\dd x\biggr)^{1/2} \\
  \,&\le\, \Bigl(\frac{5\pi}{\gamma_2 \epsilon}\Bigr)^{1/2}\,
  \tilde D(t_1)^{1/2}  \,\le\, \Bigl(\frac{5\pi}{\gamma_2 \epsilon}\Bigr)^{1/2}
  \Bigl(\frac{8C_5}{T^2}\Bigr)^{1/2}\, E(0)^{1/2}\,.
\end{align*}
Note that the right-hand side is of the form $C T^{-3/4} E(0)^{1/2}$, where the 
constant depends only on $a,b,\delta$. We now apply Lemma~\ref{lem:deriv}
to $(U,V) = (u_t,v_t)$, and we deduce from \eqref{E0bd}, \eqref{derivbd} that
\begin{equation}\label{decay2}
  \int_\R \chi(x) \bigl(2|u_t(x,T)| + |v_t(x,T)|\bigr)\dd x \,\le\, 
  C\,T^{-3/4}\,E(0)^{1/2} \,\le\, C_9\,R^{3/2}\,T^{-1/2}\,,
\end{equation}
where the constant $C_9 > 0$ only depends on $a,b,\delta$. Note that 
estimate \eqref{decay2} holds at the observation time $T$, and not 
at the intermediate time $t_1$ on which we have poor control. 

To complete the proof of \eqref{decay4}, it remains to control the quantity 
$\rho = u - v^2$, which measures the distance to the chemical equilibrium. It 
is straightforward to verify that $\rho$ satisfies the evolution equation
\begin{equation}\label{rhoeq}
  \rho_t \,=\, a \rho_{xx} - \bigl(1 + 4rv\bigr)\rho 
  + 2 \bigl(r{-}1\bigr)v v_t + 2 a v_x^2\,,
\end{equation}
where $r = a/b$. Since $v(x,t) \ge 0$, the maximum principle implies that 
$|\rho(x,t)| \le \bar \rho(x,t)$ for all $x \in \R$ and all $t \in 
[0,T]$, where $\bar \rho$ is the solution of simplified equation
\[
  \bar \rho_t \,=\, a \bar \rho_{xx} - \bar \rho + 2 |r{-}1| |v v_t| + 2 a v_x^2\,,
\]
with initial data $\bar \rho(x,0) = |\rho(x,0)|$. If we denote $c = 
2\max(|r{-}1|,a)$, we thus have
\begin{align*}
  \frac{\D}{\D t}\int_\R \chi \bar \rho\dd x \,&=\, a \int_\R \chi'' \bar \rho\dd x
  - \int_\R \chi \bar \rho\dd x + c \int_\R \chi \bigl(|v v_t| + v_x^2\bigr)\dd x \\
  \,&\le\, \bigl(a\epsilon^2 - 1\bigr)\int_\R \chi \bar \rho\dd x + c \int_\R \chi 
  \bigl(|v v_t| + v_x^2\bigr)\dd x\,. 
\end{align*}
Integrating that inequality over the time interval $[0,T]$ and using 
\eqref{epsdef}, \eqref{decay1}, \eqref{decay2}, we obtain
\begin{equation}\label{decay3}
\begin{split}
  \int_\R \chi|\rho(T)|\dd x \,&\le\, C\,e^{-T} \int_\R \chi|\rho(0)|\dd x
  + C \int_0^T e^{-(T-t)} \int_\R \chi\Bigl(|v(t)| |v_t(t)| + v_x(t)^2\Bigr)
  \dd x\dd t \\
 \,&\le\, C \,e^{-T} R^2 T^{1/2} + C \int_0^T e^{-(T-t)} \Bigl(R^{5/2}\,t^{-1/2} + 
  R^3\,t^{-1/2}\Bigr)\dd t \\ \,&\le\, C_{10}\,R^3\,T^{-1/2}\,, 
\end{split}
\end{equation}
where the constant $C_{10}$ only depends on $a,b,\delta$. 

Finally, since $au_{xx} = u_t + \rho$ and $b v_{xx} = v_t - 2\rho$, estimate 
\eqref{decay4} follows immediately from \eqref{decay2}, \eqref{decay3} after 
taking the supremum over $x_0 \in \R$. 
\end{proof}

\begin{rem}\label{rem:decay2} 
Estimate \eqref{decay2} implies that $\|u_{xx}(t)\|_{L^1_\ul} + \|v_{xx}(t)\|_{L^1_\ul}
+ \|\rho(t)\|_{L^1_\ul} \le CR^3 t^{-1/2}$ for $t \ge 1$, and using parabolic 
smoothing one deduces that
\begin{equation}\label{uvrho}
  \|u_{xx}(t)\|_{L^\infty} + \|v_{xx}(t)\|_{L^\infty} + \|\rho(t)\|_{L^\infty} 
  \,\le\, \frac{C R^{7/2}}{t^{1/2}}\,, \qquad t \ge 2\,,
\end{equation}
see Section~\ref{ssec43}. However, in view of Remark~\ref{rem:higher}, 
we believe that these decay rates are suboptimal. Note that the optimal rates 
conjectured in \eqref{higherest} suggest that the left-hand side of 
\eqref{decay4} indeed decays like $T^{-1/2}$ as $T \to +\infty$, 
so that \eqref{decay4} is not far from optimal.
\end{rem}

\subsection{From uniformly local to uniform estimates}\label{ssec43}

Lemmas~\ref{lem:first} and \ref{lem:uvrho} give apparently optimal
estimates on the quantities $u_x, v_x$ in some (time-dependent) uniformly 
local $L^2$ norm, and on $u_{xx}, v_{xx}, \rho$ in some uniformly local $L^1$
norm. To conclude the proof of Proposition~\ref{main2}, it remains to convert
these estimates into ordinary $L^\infty$ bounds, as already announced in
Remarks~\ref{rem:decay1} and \ref{rem:decay2}. The starting point is the
following well-known estimate for the heat semigroup
$S(t) = \exp(t\partial_x^2)$ acting on uniformly local spaces. If
$f \in L^p_\ul(\R)$ for some $p \in [1,+\infty)$, then
\begin{equation}\label{heatul}
  \|S(t)f\|_{L^\infty(\R)} \,\le\, C \min\bigl(1,t^{-1/(2p)}\bigr) 
  \|f\|_{L^p_\ul(\R)}\,, \qquad t > 0\,,
\end{equation}
see \cite[Proposition~2.1]{ABCD}. In particular, for short times, we have 
exactly the same parabolic smoothing effect for the solutions of the 
heat equation as in the ordinary $L^p$ spaces. It is easy to establish a 
similar result for the solutions of the linearized system \eqref{UVsys}. 

\begin{lem}\label{UVsmoothing}
Assume that $(U,V)$ is a solution of \eqref{UVsys}, where $\|v(t)\|_{L^\infty} 
\le R$ for some $R \ge 1$. Given $p \in [1,\infty)$, there exists a constant 
$C_{11} \ge 1$ depending only on $a,b,p$ such that, for all $t_1 > t_0 \ge 0$ 
satisfying $C_{11} R (t_1 - t_0) \le 1$, the following estimate holds\:
\begin{equation}\label{UVestLp}
  \|U(t)\|_{L^\infty} + \|V(t)\|_{L^\infty} \,\le\, \frac{C_{11}}{(t-t_0)^{1/(2p)}}
  \Bigl(\|U(t_0)\|_{L^p_\ul} + \|V(t_0)\|_{L^p_\ul}\Bigr)\,, \qquad
  t_0 < t \le t_1\,.
\end{equation}
\end{lem}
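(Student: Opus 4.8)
The plan is to treat the linearized system \eqref{UVsys} as a perturbation of the decoupled heat equations $U_t = aU_{xx}$, $V_t = bV_{xx}$, and to run a short-time fixed-point / Duhamel argument in which the smoothing estimate \eqref{heatul} for the scalar heat semigroup is bootstrapped to the coupled system. First I would write \eqref{UVsys} in integral (mild) form using the two heat semigroups $S(at)$, $S(bt)$: starting from time $t_0$,
\begin{equation}\label{UVmild}
\begin{split}
  U(t) \,&=\, S\bigl(a(t{-}t_0)\bigr)U(t_0) + \int_{t_0}^t S\bigl(a(t{-}s)\bigr)
  \bigl(2v(s)V(s) - U(s)\bigr)\dd s\,, \\
  V(t) \,&=\, S\bigl(b(t{-}t_0)\bigr)V(t_0) + \int_{t_0}^t S\bigl(b(t{-}s)\bigr)
  \bigl(2U(s) - 4v(s)V(s)\bigr)\dd s\,.
\end{split}
\end{equation}
The reaction terms are linear in $(U,V)$ with coefficients bounded by a multiple of $R$ (since $\|v(s)\|_{L^\infty} \le R$), so they are the natural small perturbation on a time interval of length $O(1/R)$.

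The key quantity to estimate is $N(t) := (t-t_0)^{1/(2p)}\bigl(\|U(t)\|_{L^\infty} + \|V(t)\|_{L^\infty}\bigr)$ for $t \in (t_0,t_1]$. For the linear inhomogeneous terms I would keep the $L^\infty \to L^\infty$ smoothing of $S$, which costs no negative power of time; applying \eqref{heatul} to the initial data (with the data measured in $L^p_\ul$) and bounding the Duhamel integral, one gets a bound of the schematic form
\begin{equation}\label{Nbound}
  N(t) \,\le\, C_{11}\Bigl(\|U(t_0)\|_{L^p_\ul} + \|V(t_0)\|_{L^p_\ul}\Bigr)
  + C R (t-t_0)\,\sup_{t_0 < s \le t} N(s)\,,
\end{equation}
where the extra factor $(t-t_0)$ in the Duhamel term comes from integrating the bounded-in-time semigroup against the singular weight $(s-t_0)^{-1/(2p)}$ (whose integral is finite since $1/(2p) < 1$) and accounts for the smallness. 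The point is that the restriction $C_{11}R(t_1-t_0) \le 1$ is exactly what makes the perturbative term in \eqref{Nbound} absorbable: on $(t_0,t_1]$ the coefficient $CR(t-t_0)$ is bounded by a fixed constant less than one (after possibly enlarging $C_{11}$), so the $\sup N$ term can be moved to the left-hand side, yielding \eqref{UVestLp}.

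The main obstacle, which is really just bookkeeping, is to handle the singularity $(s-t_0)^{-1/(2p)}$ in the Duhamel integral cleanly and to verify that the constant produced is uniform in $t_1-t_0$ (it should depend only on $a,b,p$). I would do this by computing $\int_{t_0}^t (s-t_0)^{-1/(2p)}\dd s = \frac{2p}{2p-1}(t-t_0)^{1-1/(2p)}$ and multiplying by $(t-t_0)^{1/(2p)}$ to recover the clean factor $(t-t_0)$ in \eqref{Nbound}. A minor subtlety is the very first application of \eqref{heatul} to the initial data: one needs $\|S(a(t-t_0))U(t_0)\|_{L^\infty} \le C(t-t_0)^{-1/(2p)}\|U(t_0)\|_{L^p_\ul}$, which is precisely \eqref{heatul} with the diffusion rate $a$ absorbed into the constant by rescaling time. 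Once these elementary estimates are in place, a standard continuity (or contraction) argument on $[t_0,t_1]$ closes the proof, and taking $p$ as prescribed gives the stated smoothing. The same argument applies whether one measures the data in $L^p_\ul$ for $p=1$ (relevant to \eqref{decay4}) or $p=2$ (relevant to \eqref{decay1}), which is why the lemma is stated for general $p \in [1,\infty)$.
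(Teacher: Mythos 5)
Your proposal is correct and follows essentially the same route as the paper: both write \eqref{UVsys} in mild (Duhamel) form, bound the free-evolution term via \eqref{heatul}, and control the linear reaction terms (with coefficients of size $O(R)$) through the weighted norm $\sup_{t_0<s\le t_1}(s-t_0)^{1/(2p)}\|W(s)\|_{L^\infty}$, absorbing the Duhamel contribution thanks to the smallness condition $C_{11}R(t_1-t_0)\le 1$. The only cosmetic difference is that the paper reduces to $t_0=0$ and treats $W=(U,V)$ in a single vector norm, while you track the singular weight computation $\int_{t_0}^t(s-t_0)^{-1/(2p)}\dd s$ explicitly; the substance is identical.
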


\begin{proof}
Without loss of generality, we can take $t_0 = 0$. We denote $W = (U,V)$ 
and we assume that the initial data $W_0 = (U_0,V_0)$ belong to $L^p_\ul(\R)^2$ for 
some $p \in [1,+\infty)$. If we write equation \eqref{UVsys} in integral 
form and use estimate \eqref{heatul}, we easily obtain
\[
  \|W(t)\|_{L^\infty} \,\le\, \frac{C}{t^{1/(2p)}}\,\|W_0\|_{L^p_\ul} + 
  C R \int_0^t \|W(s)\|_{L^\infty}\dd s\,, \qquad t > 0\,.
\]
Setting $\|W\| = \sup\bigl\{t^{1/(2p)}\|W(t)\|_{L^\infty}\,;\, 0 < t \le t_1\bigr\}$, 
we find $\|W\| \le C \|W_0\|_{L^p_\ul} + C' R t_1 \|W\|$, for some positive 
constants $C,C'$. If we now choose $t_1 > 0$ so that $C'R t_1 \le 1/2$, we 
conclude that $\|W\| \le 2C \|W_0\|_{L^p_\ul}$, which is the desired estimate. 
\end{proof}

We first apply Lemma~\ref{UVsmoothing} to $(U,V) = (u_x,v_x)$, with $p = 2$. 
As was observed in Remark~\ref{rem:decay1}, we know from \eqref{decay1} that 
$\|u_x(t)\|_{L^2_\ul} + \|v_x(t)\|_{L^2_\ul} + \|\rho(t)\|_{L^2_\ul} \le CR^{3/2}t^{-1/4}$ 
for all $t \ge 1$. Thus, taking $t \ge 2$ and choosing $t_0 = t-1/(C_{11}R)
\ge t/2$, we see that \eqref{UVestLp} implies estimate \eqref{uxvxbd}.  
Similarly, we can apply Lemma~\ref{UVsmoothing} to $(U,V) = (u_t,v_t)$, with 
$p = 1$. Here we invoke estimate \eqref{decay2}, which implies that 
$\|u_t(t)\|_{L^1_\ul} + \|v_t(t)\|_{L^1_\ul} \le C R^{3/2}t^{-1/2}$ for all $t \ge 1$, 
and choosing $t, t_0$ as above we deduce from \eqref{UVestLp} that
\begin{equation}\label{utvtbd}
  \|u_t(t)\|_{L^\infty} + \|v_t(t)\|_{L^\infty} \,\le\, \frac{C R^2}{t^{1/2}}\,, 
  \qquad t \ge 2\,.
\end{equation}

To control the quantity $\rho = u - v^2$ in $L^\infty(\R)$, we can proceed
as in the proof of Lemma~\ref{lem:uvrho}. Integrating \eqref{rhoeq} 
on the time interval $[2,t]$ and using estimates \eqref{uxvxbd}, \eqref{utvtbd}, 
we easily obtain
\begin{equation}\label{rhobd}
\begin{split}
  \|\rho(t)\|_{L^\infty} \,&\le\, e^{-(t-2)} \|\rho(2)\|_{L^\infty} + 
  c \int_2^t e^{-(t-s)}\,\Bigl(\|v(s)\|_{L^\infty}\|v_t(s)\|_{L^\infty}
  + \|v_x(s)\|_{L^\infty}^2\Bigr)\dd s \\
  \,&\le\, R^2\,e^{-(t-2)} + C \int_2^t e^{-(t-s)}\,\Bigl(\frac{R^3}{s^{1/2}}
  + \frac{R^{7/2}}{s^{1/2}}\Bigr)\dd s \,\le\, \frac{C R^{7/2}}{t^{1/2}}\,,
  \quad t \ge 2\,.
\end{split}
\end{equation} 
As $au_{xx} = u_t + \rho$ and $b v_{xx} = v_t - 2\rho$, we obtain \eqref{uvrho} 
from estimates \eqref{utvtbd}, \eqref{rhobd}. In addition, since $|\rho(x,t)| 
\le R^2$ for all $t \ge 0$ by \eqref{uvbound}, we deduce from \eqref{rhobd}
that $\|\rho(t)\|_{L^\infty} \le C R^{7/2}(1+t)^{-1/2}$ for all $t \ge 0$, 
which is the second estimate in \eqref{mainest2}. 

The only remaining step consists in improving the decay rates of the first-order 
derivatives $u_x, v_x$, so as to obtain the first estimate in \eqref{mainest2}. 

\begin{lem}\label{lem:uvbetter}
Under the assumption~\eqref{eq:lowv}, any solution $(u,v) \in C^0([0,+\infty),X^2)$ of 
\eqref{RD2} with initial data $(u_0,v_0) \in X_+^2$ satisfies, for any $T > 0$, 
\begin{equation}\label{uvbetter}
  \|u_x(t)\|_{L^\infty} + \|v_x(t)\|_{L^\infty} \,\le\, \frac{C_{12} R^{7/2}}{t^{1/2}}
  \,\log(2+t)\,, \quad t > 0\,,
\end{equation}
where $R = 1 + \|u_0\|_{L^\infty} + \|v_0\|_{L^\infty}$ and the constant $C_{12}$ 
depends only on $a,b,\delta$. 
\end{lem}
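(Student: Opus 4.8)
The plan is to exploit the slow/fast splitting of the linearized system \eqref{UVsys} that governs $(u_x,v_x)$. Writing $w=2u+v$ and recalling $\rho=u-v^2$, the pair $(u_x,v_x)$ is recovered from the two scalar quantities $w_x=2u_x+v_x$ and $\rho_x=u_x-2vv_x$ by inverting the (everywhere invertible, since $v\ge0$) linear relation, namely
\begin{equation*}
  u_x \,=\, \frac{2v\,w_x+\rho_x}{1+4v}\,, \qquad v_x \,=\, \frac{w_x-2\rho_x}{1+4v}\,.
\end{equation*}
Because $\tfrac{2v}{1+4v}\le\tfrac12$ and $\tfrac{1}{1+4v}\le1$, this yields the pointwise bound $\|u_x(t)\|_{L^\infty}+\|v_x(t)\|_{L^\infty}\le C\bigl(\|w_x(t)\|_{L^\infty}+\|\rho_x(t)\|_{L^\infty}\bigr)$, so it suffices to bound separately the reaction-damped mode $\rho_x$ and the conserved mode $w_x$.

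The mode $\rho_x$ is the easy one. Using the interpolation inequality $\|\rho_x\|_{L^\infty}^2\le 2\|\rho\|_{L^\infty}\|\rho_{xx}\|_{L^\infty}$ together with the decay $\|\rho\|_{L^\infty}\le CR^{7/2}t^{-1/2}$ from \eqref{rhobd} and the bound $\|\rho_{xx}\|_{L^\infty}\le CR^{9/2}t^{-1/2}$ (obtained from \eqref{uvrho} and \eqref{uxvxbd} via $\rho_{xx}=u_{xx}-2v_x^2-2vv_{xx}$), I get $\|\rho_x(t)\|_{L^\infty}\le CR^4 t^{-1/2}$ for $t\ge2$, with no logarithmic loss. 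For the conserved mode I use that $w$ solves the forced heat equation $w_t=bw_{xx}+2(a-b)u_{xx}$, so $w_x$ satisfies $\partial_t w_x=b\partial_{xx}w_x+2(a-b)\partial_x u_{xx}$. Writing this in Duhamel form from the intermediate time $t/2$, the homogeneous contribution $S(bt/2)w_x(t/2)$ is controlled by the long-time parabolic smoothing \eqref{heatul} with $p=2$ and the uniformly local bound $\|w_x(t/2)\|_{L^2_\ul}\le CR^{3/2}t^{-1/4}$ coming from \eqref{decay1}; this already produces the desired rate $t^{-1/2}$.

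The main obstacle is the forcing term $2(a-b)\int_{t/2}^t\partial_x S(b(t-s))u_{xx}(s)\dd s$. Since $u_{xx}$ decays only like $t^{-1/2}$ in every uniformly local norm, estimating this integral by absolute values — the operator norm of $\partial_x S(b(t-s))$ against a norm of $u_{xx}(s)$ — fails: on the bulk interval $[t/2,t-1]$ the kernel gains only $(t-s)^{-1/2}$ and $\int_{t/2}^t(t-s)^{-1/2}s^{-1/2}\dd s=O(1)$, so the crude bound does not decay at all. The decay can only be recovered by exploiting the cancellation carried by the odd kernel $\partial_x S$ acting on the exact derivative $u_{xx}=\partial_x u_x$. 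The natural device is a Gaussian-weighted (self-similar) $L^2$ estimate for $w_x$ at the parabolic scale $\sqrt t$, fed by the sharp localized energy bound behind \eqref{decay1}, i.e. the strong form $\int_0^t s\,\tilde D(s)\dd s\le CR^3 t^{1/2}$ of \eqref{tEineq2}, which controls $u_{xx}$ in a space–time $L^2$ norm far better than any pointwise-in-time estimate. Carrying this out, the borderline case of the resulting time integral behaves like $\int^t(t-s)^{-1}\dd s$, and this is exactly what produces the logarithmic factor $\log(2+t)$ in \eqref{uvbetter}. Bringing the power of $t$ down from the $t^{-1/4}$ that absolute-value estimates yield to the sharp $t^{-1/2}$ is the heart of the matter, and is the step I expect to be the most delicate.

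Finally, combining the bounds on $w_x$ and $\rho_x$ through the reduction above gives \eqref{uvbetter} for $t\ge2$, with an explicit power of $R$ that can be optimized to $R^{7/2}$ by keeping track of the interpolation constants. For $0<t\le2$ the estimate follows directly from the short-time parabolic smoothing already used to prove \eqref{uxvxbd}, since then $t^{-1/2}\log(2+t)$ dominates. Taking the supremum over the base point $x_0\in\R$ hidden in the localization \eqref{chidef} is harmless, as all constants are independent of $x_0$; this completes the proof of the first estimate in \eqref{mainest2}.
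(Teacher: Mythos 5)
Your reduction to the modes $w_x$ and $\rho_x$ is algebraically correct, your bound on $\rho_x$ via the Landau inequality $\|\rho_x\|_{L^\infty}^2 \le 2\|\rho\|_{L^\infty}\|\rho_{xx}\|_{L^\infty}$ together with \eqref{rhobd}, \eqref{uvrho}, \eqref{uxvxbd} is fine, and so is the homogeneous term $S(bt/2)w_x(t/2)$ via \eqref{heatul} and \eqref{decay1}. But the entire content of the lemma sits in the forcing integral $2(a{-}b)\int_{t/2}^t \partial_x S(b(t{-}s))\,u_{xx}(s)\dd s$, and there your argument stops: you announce a ``Gaussian-weighted self-similar $L^2$ estimate'' exploiting cancellation of the odd kernel, but you do not carry it out, and you yourself flag it as the delicate step. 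As written this is a plan, not a proof. Worse, the natural implementation of that plan --- Cauchy--Schwarz in space-time against the dissipation bound $\int_0^T t\,\tilde D(t)\dd t \le C R^3\,T^{1/2}$ contained in \eqref{tEineq2} --- only yields $t^{-1/4}$: the kernel factor $\bigl(\int_{t/2}^{t-1}\!\int_\R |\partial_x S(b(t{-}s))|^2\,\chi^{-1} s^{-1}\dd x \dd s\bigr)^{1/2}$ costs $t^{-1/2}$ while the dissipation factor costs $t^{+1/4}$, matching your own admission that getting from $t^{-1/4}$ to $t^{-1/2}\log$ is ``the heart of the matter.'' So the proposal has a genuine gap exactly at the step that distinguishes this lemma from \eqref{uxvxbd}.

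Your diagnosis that ``the decay can only be recovered by exploiting the cancellation'' is in fact mistaken, and this is where the paper's proof is instructive: no cancellation is used at all. The paper writes the Duhamel formula \eqref{uxint} for $u_x$ (and similarly $v_x$) directly, with forcing $\rho$, and splits $[t/2,t]$ into two regimes. Near the endpoint, $s\ge \max(t{-}1,t/2)$, the $L^\infty$ bound \eqref{rhobd} gives \eqref{case1est}, i.e.\ $(t{-}s)^{-1/2}(1{+}s)^{-1/2}$. In the bulk, $t/2\le s\le t{-}1$, it uses the \emph{uniformly local $L^1$ bound at the parabolic scale $\sqrt s$} from \eqref{decay3}: since $t-s\le s$, the Gaussian envelope of the differentiated kernel, whose sup norm is $C(t{-}s)^{-1}$, is dominated by $\cosh(\epsilon(s)|x{-}y|)^{-1}$ with $\epsilon(s)=(C_0 s)^{-1/2}$, yielding the pure size estimate \eqref{case2est}, i.e.\ $(t{-}s)^{-1}(1{+}s)^{-1/2}$; then $\int_{t/2}^t \min\bigl((t{-}s)^{-1},(t{-}s)^{-1/2}\bigr)\dd s \le C\log(2{+}t)$ produces the logarithm. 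The same two-regime device closes your route as well, because \eqref{decay4} supplies precisely the needed input for your forcing term: $\sup_{x_0}\int_{I(x_0,s)}|u_{xx}(x,s)|\dd x \le C_8 R^3 s^{-1/2}$ gives $\|\partial_x S(b(t{-}s))u_{xx}(s)\|_{L^\infty}\le C(t{-}s)^{-1}R^3 s^{-1/2}$ on the bulk interval, while \eqref{uvrho} handles $s\in[t{-}1,t]$. In short: the missing ingredient is the $L^1_\ul$ (not $L^2$, not $L^\infty$) control of the forcing at scale $\sqrt s$, converted into $(t{-}s)^{-1}$ kernel decay by absolute values; with it your $w_x$/$\rho_x$ skeleton would work, though the paper dispenses with that decomposition entirely.
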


\begin{proof}
Since $u_t = a u_{xx} - \rho$, we have the integral representation
\begin{equation}\label{uxint}
  u_x(t) \,=\, \partial_x S(at/2) u(t/2) - \int_{t/2}^t \partial_x S(a(t{-}s)) 
  \rho(s)\dd s\,, \qquad t > 0\,,
\end{equation}
where $S(t) = \exp(t\partial_x^2)$ is the heat semigroup. The first term in 
the right-hand side is easily estimated\:
\begin{equation}\label{uxsimple}
  \|\partial_x S(at/2) u(t/2)\|_{L^\infty} \,\le\, \frac{C}{t^{1/2}}\,
  \|u(t/2)\|_{L^\infty} \,\le\, \frac{CR}{t^{1/2}}\,.
\end{equation}
To bound the integral term in \eqref{uxint}, we distinguish two cases, 
according to whether $s \ge t-1$ or $s < t-1$ (if $t \le 2$, the second
possibility is excluded). 

\medskip\noindent
{\bf Case 1\:} $s \ge \max(t-1,t/2)$. Since $\|\partial_x S(t)f\|_{L^\infty} \le 
C t^{-1/2}\|f\|_{L^\infty}$, we obtain using \eqref{rhobd}
\begin{equation}\label{case1est}
  \bigl\|\partial_x S(a(t{-}s))\rho(s)\bigr\|_{L^\infty} \,\le\, 
  \frac{C}{(t-s)^{1/2}}\,\|\rho(s)\|_{L^\infty} \,\le\, \frac{C}{(t-s)^{1/2}}\,
  \frac{R^{7/2}}{(1+s)^{1/2}}\,.
\end{equation}

\medskip\noindent
{\bf Case 2\:} $t \ge 2$ and $t/2 \le s \le t-1$. Here we observe that, for all 
$x \in \R$, 
\begin{align*}
  \bigl|\partial_x S(a(t{-}s))\rho(t)\bigr|(x) \,&\le\, 
   \frac{C}{(t-s)^{1/2}}\int_\R \exp\biggl(-\frac{|x-y|^2}{4a(t{-}s)}\biggr)
   \frac{|x-y|}{t-s}\,|\rho(y,s)|\dd y \\ 
   \,&\le\, 
   \frac{C}{t-s}\int_\R \exp\biggl(-\frac{|x-y|^2}{5a(t{-}s)}\biggr)
   \,|\rho(y,s)|\dd y \\ 
    \,&\le\, \frac{C}{t-s}\int_\R \frac{|\rho(y,s)|}{\cosh\bigl(\epsilon(s) 
   |x-y|\bigr)}\dd y\,,\quad \hbox{where}\quad \epsilon(s) \,=\, 
   \frac{1}{(C_0 s)^{1/2}}\,. 
\end{align*}
In the last line, we used the assumption that $t-s \le s$ and the fact 
that, for any $\gamma > 0$, there exists $C > 0$ such that $e^{-x^2}
\le C\cosh(\gamma x)^{-1}$ for all $x \in \R$. Now, we know from 
\eqref{decay3} that
\[
  \sup_{x \in \R}\,\int_\R \frac{|\rho(y,s)|}{\cosh\bigl(\epsilon(s)|x-y|\bigr)}\dd y
  \,\le\, C_{10} R^3 s^{-1/2} \,\le\, \frac{C R^3}{(1+s)^{1/2}}\,, 
\]
and we conclude that  
\begin{equation}\label{case2est}
  \bigl\|\partial_x S(a(t{-}s))\rho(s)\bigr\|_{L^\infty} \,\le\, 
  \frac{C}{t-s}\,\frac{R^3}{(1+s)^{1/2}}\,.
\end{equation}

Combining \eqref{case1est}, \eqref{case2est} we can estimate the 
integral term in \eqref{uxint} as follows\:
\begin{align*}
  \int_{t/2}^t \bigl\|\partial_x S(a(t{-}s))\rho(s)\bigr\|_{L^\infty} \dd s
  \,&\le\, \frac{CR^{7/2}}{(1+t)^{1/2}}\int_{t/2}^t \min\biggl(\frac{1}{t-s}\,,\,
  \frac{1}{(t-s)^{1/2}}\biggr)\dd s \\
  \,&\le\, \frac{C R^{7/2}}{(1+t)^{1/2}}\log(2+t)\,,
\end{align*}
and using in addition \eqref{uxsimple} we obtain the desired estimate for 
$\|u_x(t)\|_{L^\infty}$. The bound on $\|v_x(t)\|_{L^\infty}$ is obtained by 
a similar argument.
\end{proof}

\subsection{The case where $v$ is not bounded away from zero}\label{ssec44}

We briefly indicate here how the arguments of Sections~\ref{ssec41}--\ref{ssec43}
have to be adapted to establish Proposition~\ref{main2} without assuming that the 
second component $v(x,t)$ of system~\eqref{RD2} is bounded away from zero. 
As already mentioned, the idea is to use the modified EDS structures introduced
in Remark~\ref{lem:edf34}, where the additional parameter $\theta > 0$ is chosen 
sufficiently small, depending on $a,b$. It is straightforward to verify that the flux term 
$f_1(x,t)$ in \eqref{edf3} still satisfies the bound $f_1^2 \le C_0 e_1 d_1$ for some 
$C_0 > 0$, so that the proof of inequality \eqref{Eineq} is unchanged. Similarly, 
the additional flux term $\theta w_x w_t$ in \eqref{edf4} is harmless, because
\[
  \bigl(w_x w_t\bigr)^2 \,=\, w_x^2 \bigl(bw_{xx} + 2(a-b)u_{xx}\bigr)^2
  \,\le\, C\, \tilde e_1 \tilde d_1\,,
\]
for some constant $C > 0$. As a consequence, the proof of the crucial inequality
\eqref{tEineq2} is not modified either. In view of the improved lower bounds 
\eqref{cbd2}, the conclusion of Lemma~\ref{lem:first} is strengthened as follows\:
\[
  \sup_{x_0 \in \R} \,\int_{I(x_0,T)} \Bigl(u_x^2 + (1+v)v_x^2 + \rho^2\Bigr)
  (x,T)\dd x \,\le\, C_6\,R^3\,T^{-1/2}\,,
\]
for some constant $C_6 > 0$ depending only on $a,b,\theta$. Similarly, 
Lemma~\ref{lem:uvrho} holds without assuming \eqref{eq:lowv} and with
the stronger conclusion
\[
  \sup_{x_0 \in \R} \,\int_{I(x_0,T)} \Bigl(|u_{xx}| + (1+v)|v_{xx}|  + |\rho|\Bigr)
  (x,T)\dd x \,\le\, C_8\,R^3\,T^{-1/2}\,,
\]
where the constant $C_8$ only depends on $a,b,\theta$. The rest of the proof 
of Proposition~\ref{main2} does not rely on assumption~\ref{eq:lowv}, and 
follows exactly the same lines as in Section~\ref{ssec43}. 

\section{Stability analysis of spatially homogeneous equilibria}
\label{sec5}

In this section we study the solutions of system~\eqref{RD} in a neighborhood
of a spatially homogeneous equilibrium $(\bar u,\bar v)$ with $\bar u = \bar v^2$ 
and $\bar v > 0$. We look for solutions in the form
\[
  u(x,t) \,=\, \bar u\bigl(1 + 4 \tilde u(x,t)\bigr)\,, \qquad
  v(x,t) \,=\, \bar v\bigl(1 + 2 \tilde v(x,t)\bigr)\,,
\]
so that the perturbations $\tilde u, \tilde v$ satisfy the system
\begin{equation}\label{RD3}
\begin{split}
  \tilde u_t(x,t) \,&=\, a \tilde u_{xx}(x,t) + k_1\bigl(\tilde v(x,t)
  - \tilde u(x,t) + \tilde v(x,t)^2\bigr)\,, \\
  \tilde v_t(x,t) \,&=\, b \tilde v_{xx}(x,t) + k_2\bigl(\tilde u(x,t)
  - \tilde v(x,t) - \tilde v(x,t)^2\bigr)\,,
\end{split}
\end{equation}
where $k_1 = k$ and $k_2 = 4k\bar v$. We introduce the matrix notation
\[
  W \,=\, \begin{pmatrix} W_1 \\ W_2\end{pmatrix} \,\equiv\,
 \begin{pmatrix} \tilde u \\ \tilde v\end{pmatrix}\,,\qquad
  \cN \,=\, \begin{pmatrix} -1 \\ 1\end{pmatrix}\,, \qquad
  \cM \,=\, \begin{pmatrix} k_1 \\ -k_2\end{pmatrix}\,, \qquad
  D \,=\, \begin{pmatrix} a & 0 \\ 0 & b\end{pmatrix}\,,
\]
so that \eqref{RD3} takes the simpler form
\begin{equation}\label{RD4}
  W_t \,=\, D W_{xx} + \bigl(\cN \cdot W + W_2^2\bigr)\cM\,.
\end{equation}
Note that the reaction terms in \eqref{RD4} are always proportional
to the vector $\cM$, which therefore spans the ``stoichiometric subspace''
of the chemical reaction. They vanish when $\cN\cdot W + W_2^2 = 0$,
so that the tangent space to the manifold $\cE$ of equilibria at
the origin $W = 0$ is orthogonal to the vector $\cN$. 

The integral equation associated with \eqref{RD4} is
\begin{equation}\label{Duhamel}
  W(t) \,=\, \cS(t) * W_0 + \int_0^t \cS(t-s)\cM * W_2(s)^2 \dd s\,,
  \qquad t > 0\,,
\end{equation}
where $*$ denotes the convolution with respect to the space variable
$x \in \R$, and $\cS(t) = \cS(\cdot,t)$ is the matrix-valued function
defined by
\begin{equation}\label{Sdef}
  \cS(x,t) \,=\, \frac{1}{2\pi}\int_\R \exp\bigl(t A(\xi)\bigr)
  \,e^{i\xi x}\dd x\,, \qquad x \in \R\,,\quad t > 0\,,
\end{equation}
with
\begin{equation}\label{Adef}
  A(\xi) \,=\, -D\xi^2 + \cM \,\cN^\top \,=\, \begin{pmatrix} -k_1 -a\xi^2
  & k_1 \\ k_2 & -k_2 -b\xi^2\end{pmatrix}\,.
\end{equation}
The exponential in \eqref{Sdef} can be computed explicitly. For that
purpose, it is convenient to introduce the notation
\[
  \mu \,=\, \frac{a+b}{2}\,, \qquad \nu \,=\, \frac{a-b}{2}\,, \qquad
  \kappa \,=\, \frac{k_1+k_2}{2}\,, \qquad \ell \,=\, \frac{k_1-k_2}{2}\,,
\]
so that $a = \mu + \nu$, $b = \mu - \nu$, $k_1 = \kappa+\ell$, $k_2 = 
\kappa - \ell$. We observe that
\[
  A(\xi) \,=\, - (\kappa+\mu\xi^2)\1 + B(\xi)\,, \qquad \hbox{where}\qquad
  B(\xi) \,=\, \begin{pmatrix} -\ell-\nu\xi^2
  & k_1 \\ k_2 & \ell + \nu \xi^2\end{pmatrix}\,.
\]
Moreover $B(\xi)^2 = \Delta(\xi)^2\1$, where $\1$ is the identity matrix and
\begin{equation}\label{Deltadef}
  \Delta(\xi) \,=\, \sqrt{\kappa^2 + 2 \ell \nu \xi^2 + \nu^2 \xi^4}
  \,=\, \sqrt{k_1 k_2  + (\ell + \nu \xi^2)^2}\,.
\end{equation}
In particular, the eigenvalues of $A(\xi)$ are real and equal to $\lambda_\pm(\xi)
= -(\kappa+\mu \xi^2) \pm \Delta(\xi)$. Using these observations, it is easy to
verify that
\begin{equation}\label{expA}
  \exp\bigl(t A(\xi)\bigr) \,=\, e^{-(\kappa+\mu\xi^2)t}\Bigl(\cosh\bigl(\Delta(\xi)
  t\bigr) \,\1 \,+\, \frac{\sinh(\Delta\bigl(\xi) t\bigr)}{\Delta(\xi)}
  \,B(\xi)\Bigr)\,, \qquad t \ge 0\,.  
\end{equation}

The following result specifies the decay rate of the kernel $\cS(\cdot,t)$ in 
$L^1(\R)$ as $t \to +\infty$. 

\begin{prop}\label{prop:Sest}
For any integer $m \in \N$, there exists a constant $C > 0$ such that the 
matrix-valued function $S(\cdot,t)$ defined by \eqref{Sdef} satisfies, 
for all $t > 0$, the estimates 
\begin{equation}\label{Sest}
\begin{split}
  \|\partial_x^m \cS(t)\|_{L^1(\R)} \,&\le\, C t^{-m/2}\,, \\
  \|\partial_x^m \cS(t)\cM\|_{L^1(\R)} \,&\le\, C t^{-m/2}\bigl(
  e^{-2\kappa t} + |\nu| t^{-1}\bigr)\,, \\
  \|\partial_x^m \cN^\top\cS(t)\|_{L^1(\R)} \,&\le\, C t^{-m/2}\bigl(
  e^{-2\kappa t} + |\nu| t^{-1}\bigr)\,, \\
  \|\partial_x^m \cN^\top\cS(t)\cM\|_{L^1(\R)} \,&\le\, C t^{-m/2}\bigl(
  e^{-2\kappa t} + \nu^2 t^{-2}\bigr)\,.
\end{split}
\end{equation}
\end{prop}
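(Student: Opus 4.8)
The plan is to work entirely on the Fourier side, using the explicit diagonalization \eqref{expA}. Write $\lambda_\pm(\xi) = -(\kappa+\mu\xi^2)\pm\Delta(\xi)$ for the two (real) eigenvalues of $A(\xi)$ and $P_\pm(\xi) = \frac12\bigl(\1\pm B(\xi)/\Delta(\xi)\bigr)$ for the associated spectral projectors; these are well defined and smooth because $\Delta(\xi)\ge\sqrt{k_1k_2}>0$ by \eqref{Deltadef}, and \eqref{expA} then reads $\exp(tA(\xi)) = e^{\lambda_+(\xi)t}P_+(\xi)+e^{\lambda_-(\xi)t}P_-(\xi)$. The whole proof reduces to bounding the $L^1$ norm of the inverse Fourier transform of $(i\xi)^m$ times each of the four symbols $\exp(tA(\xi))$, $\exp(tA(\xi))\cM$, $\cN^\top\exp(tA(\xi))$, and $\cN^\top\exp(tA(\xi))\cM$.

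The engine of the argument is a pair of uniform spectral bounds: there exist $c_0,c_1>0$, depending only on $a,b,k_1,k_2$, such that $\lambda_+(\xi)\le-c_0\xi^2$ and $\lambda_-(\xi)\le-2\kappa-c_1\xi^2$ for all $\xi\in\R$. I would derive these from the trace and determinant identities $\lambda_++\lambda_- = -2\kappa-2\mu\xi^2$ and $\lambda_+\lambda_- = \det A(\xi) = (k_1b+k_2a)\xi^2+ab\xi^4\ge 0$, which force both eigenvalues to be negative for $\xi\neq0$, together with the asymptotics $\lambda_+(\xi)\sim-\tfrac{k_1b+k_2a}{2\kappa}\xi^2$ and $\lambda_-(\xi)\to-2\kappa$ as $\xi\to0$, and $\lambda_+(\xi)\sim-\min(a,b)\xi^2$, $\lambda_-(\xi)\sim-\max(a,b)\xi^2$ as $|\xi|\to\infty$. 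A short continuity argument on the quotients $-\lambda_+(\xi)/\xi^2$ and $-(\lambda_-(\xi)+2\kappa)/\xi^2$, which are continuous, positive on $(0,\infty)$, and have strictly positive limits at both ends, then upgrades these to the stated global quadratic bounds. The first bound encodes the heat-like behavior of the slow mode $P_+$ and is responsible for the factors $t^{-m/2}$; the second isolates the clean rate $e^{-2\kappa t}$ and, crucially, provides Gaussian localization in $\xi$ for the fast mode $P_-$, so that its entire contribution carries $e^{-2\kappa t}$ with no slowly decaying high-frequency remainder.

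Next I would record the algebraic identities responsible for the improved prefactors. A direct computation gives $B(\xi)\cM = -\kappa\cM-\nu\xi^2(k_1,k_2)^\top$ and $\cN^\top B(\xi) = -\kappa\cN^\top+\nu\xi^2(1,1)$, whence $P_+(0)\cM=0$ and $\cN^\top P_+(0)=0$; more precisely one finds $P_+(\xi)\cM = \nu\xi^2 G_1(\xi)$, $\cN^\top P_+(\xi) = \nu\xi^2 G_2(\xi)$, and $\cN^\top P_+(\xi)\cM = \nu^2\xi^4 g_3(\xi)$, where $G_1,G_2,g_3$ are smooth and bounded together with all their $\xi$-derivatives (again thanks to $\Delta\ge\sqrt{k_1k_2}$). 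Thus contracting on the right by $\cM$, or on the left by $\cN^\top$, annihilates the slow mode at $\xi=0$, and doing both raises the order of vanishing; moreover each contraction carries an explicit factor $\nu=(a-b)/2$, which vanishes precisely in the equal-diffusivity case where $w=2u+v$ decouples. This is exactly the source of the prefactors $|\nu|$ and $\nu^2$ in \eqref{Sest}.

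Finally I would assemble the $L^1$ estimates by a scaling argument, starting from the elementary inequality $\|\mathcal{F}^{-1}g\|_{L^1(\R)}\le C\bigl(\|g\|_{L^2}+\|\partial_\xi g\|_{L^2}\bigr)$ (Cauchy--Schwarz against $\langle x\rangle^{-1}$ followed by Plancherel). For each kernel I split the symbol into its $P_+$ and $P_-$ parts. In the slow part I substitute $\xi=\eta\,t^{-1/2}$: this extracts $t^{-m/2}$ from the $(i\xi)^m$ factor, an extra $t^{-1}$ (resp. $t^{-2}$) from each factor $\nu\xi^2$ (resp. $\nu^2\xi^4$) supplied by the contraction identities, and leaves a symbol in $\eta$ whose $H^1_\eta$ norm is bounded uniformly in $t$, because $t\lambda_+(\eta t^{-1/2})\le-c_0\eta^2$ and $|\lambda_+'(\xi)|\le C|\xi|$; applying the inequality to this rescaled symbol and unscaling yields the claimed power of $t$ with a $t$-independent constant. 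In the fast part I first factor out $e^{-2\kappa t}$ using $\lambda_-(\xi)\le-2\kappa-c_1\xi^2$ (so that $t(\lambda_-(\eta t^{-1/2})+2\kappa)\le-c_1\eta^2$), and then run the same rescaling to get $t^{-m/2}e^{-2\kappa t}$. Summing the two contributions gives the four bounds in \eqref{Sest}. The main obstacle is the second step, establishing the global quadratic gap $\lambda_-(\xi)\le-2\kappa-c_1\xi^2$ rather than merely $\lambda_-\le-2\kappa$, since it is exactly this uniform bound that allows the fast mode to be absorbed into the single clean rate $e^{-2\kappa t}$; the bookkeeping of vanishing orders and $\nu$-powers in the third step is elementary but must be carried out carefully to obtain the correct powers of $t$.
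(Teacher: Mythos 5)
Your proposal runs on the same engine as the paper's proof: both work on the Fourier side with the explicit formula \eqref{expA}, both extract the prefactors $|\nu|$ and $\nu^2$ from the cancellation identities \eqref{MNid}, and both convert $H^1$ control of the symbol into $L^1$ control of the kernel via a Carlson--Beurling-type inequality (the paper uses the multiplicative form \eqref{finterp} together with direct pointwise bounds on $\hat\cS$ and $\partial_\xi\hat\cS$; you use the additive form after the parabolic rescaling $\xi = \eta\,t^{-1/2}$, an equally valid bookkeeping device). Your algebra checks out: the factorizations $P_+(\xi)\cM = \nu\xi^2 G_1(\xi)$, $\cN^\top P_+(\xi) = \nu\xi^2 G_2(\xi)$ and $\cN^\top P_+(\xi)\cM = \nu^2\xi^4 g_3(\xi)$ are correct (for instance $\cN^\top P_+\cM = -\nu^2\xi^4\,k_1k_2/\bigl(\Delta(\kappa^2+\ell\nu\xi^2+\kappa\Delta)\bigr)$, whose denominator is bounded below by $\min(k_1k_2,k_1\kappa)$ times a constant), and your spectral bounds $\lambda_+(\xi)\le-\min(a,b)\,\xi^2$ and $\lambda_-(\xi)\le-2\kappa-\min(a,b)\,\xi^2$ are true; they follow at once from $\kappa-|\nu|\xi^2\le\Delta(\xi)\le\kappa+|\nu|\xi^2$ (square both sides, using $|\ell|<\kappa$ in \eqref{Deltadef}), which is exactly the paper's route via $\mu\xi^2+\Delta(\xi)\ge\kappa+b\xi^2$, so your trace/determinant/continuity detour, while correct, is unnecessary.

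The one step that fails as written is the claim that the rescaled slow and fast symbols are bounded in $H^1_\eta$ \emph{uniformly in $t>0$}. You only track the $\eta$-derivative falling on the exponential factor, where $|\lambda_\pm'(\xi)|\le C|\xi|$ indeed restores the needed power of $t$ after rescaling; but the derivative can also fall on the projector (or on $G_1,G_2,g_3$), producing $t^{-1/2}P_\pm'(\eta t^{-1/2})\,e^{t\lambda_\pm}$. Since $|P_+'(\xi)|\sim c|\nu||\xi|$ near $\xi=0$ with no compensating factor of $t$, the $L^2_\eta$ norm of this term grows like $t^{-1/4}$ as $t\to0$ (already for $m=0$), so mode by mode your constants blow up at short times and the first line of \eqref{Sest} does not follow from your argument for $0<t\le1$. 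The conclusion is nevertheless true, and the repair is cheap: for $t\le1$ the last three bounds in \eqref{Sest} are implied by the first, and for the first one must not split the modes. Either pair the offending contributions using $P_-'=-P_+'$, so that they combine into $(e^{t\lambda_+}-e^{t\lambda_-})P_+' = e^{t\lambda_+}(1-e^{-2\Delta t})P_+'$ with $1-e^{-2\Delta t}\le 2\Delta t$ and $|P_+'(\xi)|\,\Delta(\xi)\le C|\xi|$, which restores the factor $t$ (this cancellation is precisely why the paper's unsplit pointwise bound $|\partial_\xi\hat\cS(\xi,t)|\le C|\xi|\,t\,e^{-b\xi^2t}$ holds for all $t>0$); or apply the Carlson--Beurling inequality \emph{without} rescaling on $t\le1$, noting that $P_+'$ and $P_+-P_+(\infty)$ lie in $L^2(\R)$ with fixed norms. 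For $t\ge1$ your argument is sound as stated, since there $t^{-1/2}|P_\pm'(\eta t^{-1/2})|\le C\eta$ and the same applies to the derivatives of $G_1,G_2,g_3$.
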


\begin{proof}
The following interpolation estimate will be repeatedly used\: if $f : \R 
\to \C$ is integrable and if the Fourier transform $\hat f$ belongs to the 
Sobolev space $H^1(\R)$, then
\begin{equation}\label{finterp}
  \|f\|_{L^1}^2 \,\le\, C\|f\|_{L^2} \|x f\|_{L^2} \,\le\, C  
  \|\hat f\|_{L^2} \|\partial_\xi \hat f\|_{L^2}\,. 
\end{equation}
Of course, inequality \eqref{finterp} remains valid if $f$ is vector-valued or 
matrix-valued. Given any $t > 0$, we first apply \eqref{finterp} to $f(x) = S(x,t)$, 
recalling that $\hat f(\xi) = \hat \cS(\xi,t) = \exp(tA(\xi))$ is given 
by \eqref{expA}. Without loss of generality, we assume henceforth that $a \ge b$, 
so that $\nu \ge 0$ (the converse case is completely similar). Using the 
elementary bounds
\[
  \max\bigl(\sqrt{k_1k_2}\,,\,|\ell + \nu\xi^2|\bigr) \,\le\, \Delta(\xi)
  \,\le\, \kappa + \nu \xi^2\,,
\]
as well as $\cosh(z) \le e^z$ and $\sinh(z) \le \min(1,z)e^z$ for $z \ge 0$, 
we easily deduce from \eqref{expA} the pointwise estimates
\[
  |\hat \cS(\xi,t)| \,\le\, C\,e^{-b\xi^2 t}\,, \qquad
  |\partial_\xi\hat \cS(\xi,t)| \,\le\, C|\xi| t\,e^{-b\xi^2 t}\,,
\]
which imply that $\|\hat\cS(t)\|_{L^2} \le C t^{-1/4}$ and $\|\partial_\xi \hat\cS(t)\|_{L^2} 
\le C t^{1/4}$. It thus follows from \eqref{finterp} that the $L^1$ norm of 
$\cS(t)$ is uniformly bounded for all $t > 0$, and repeating the same argument 
with $f(x) = \partial_x^m S(x,t)$ for some $m \in \N$ we arrive at the first 
inequality in \eqref{Sest}. 

The other inequalities in \eqref{Sest} exploit cancellations that 
occur when the matrix $S(x,t)$ acts on the vector $\cM$ (to the right) 
or on the vector $\cN^\top$ (to the left). We start from the identities
\begin{equation}\label{MNid}
  B(\xi)\cM \,=\, -\kappa \cM - \nu \xi^2 \begin{pmatrix} k_1 \\ k_2
  \end{pmatrix}\,, \qquad
  \cN^\top B(\xi)\,=\, -\kappa \cN^\top + \nu \xi^2 \begin{pmatrix} 1 & 1
  \end{pmatrix}\,,
\end{equation}
which follow immediately from the definitions. Writing $\cosh(\Delta t) = 
e^{-\Delta t} + \sinh(\Delta t)$ in \eqref{expA}, we find
\begin{equation}\label{SMprelim}
  \hat \cS(\xi,t)\cM \,=\, e^{-(\kappa+\mu\xi^2)t}\left\{e^{-\Delta t}\cM + 
  \Bigl(1 - \frac{k}{\Delta}\Bigr)\sinh(\Delta t)\cM - \nu\xi^2\,
  \frac{\sinh(\Delta t)}{\Delta}\begin{pmatrix} k_1 \\ k_2
  \end{pmatrix}\right\}\,.
\end{equation}
In the particular case where $\nu = 0$, one has $\Delta = \kappa$, so that 
$\hat \cS(\xi,t)\cM = e^{-(2\kappa +\mu \xi^2)t}\cM$. In general, only the 
first term in the right-hand side of \eqref{SMprelim} decays exponentially 
in time, and can be estimated using the elementary bound $\mu \xi^2 + 
\Delta(\xi) \ge \kappa + b\xi^2$. The remaining terms are treated as above, 
and we arrive at pointwise estimates of the form
\begin{align*}
  |\hat \cS(\xi,t)\cM| \,&\le\, e^{-(2\kappa +b\xi^2)t} + C\nu\xi^2\,e^{-b\xi^2 t}\,,\\
  |\partial_\xi\hat \cS(\xi,t)\cM| \,&\le\, C|\xi|t\,e^{-(2\kappa +b\xi^2)t}
  + C\nu|\xi|(1+\xi^2t)\,e^{-b\xi^2 t}\,. 
\end{align*}
Invoking \eqref{finterp}, we thus obtain the second inequality in \eqref{Sest}. 
The third one is obtained similarly, starting from the second relation in 
\eqref{MNid}. 

Finally, a straightforward calculation shows that
\[
  \cN^\top \hat \cS(\xi,t)\cM \,=\, -2\,e^{-(\kappa+\mu\xi^2)t}\left\{
  \kappa\,e^{-\Delta t} + \Bigl(\kappa - \frac{\kappa^2 + \ell\nu \xi^2}{\Delta}\Bigr)
  \sinh(\Delta t)\right\}\,,
\]
and we deduce the pointwise estimates
\begin{align*}
  |\cN^\top \hat \cS(\xi,t)\cM| \,&\le\, C\,e^{-(2\kappa + b\xi^2)t} + C\nu^2\xi^4
  \,e^{-b\xi^2 t}\,,\\
  |\partial_\xi\cN^\top \hat \cS(\xi,t)\cM| \,&\le\, C|\xi|t\,e^{-(2\kappa 
  +b \xi^2)t} + C\nu^2|\xi|^3(1+\xi^2t)\,e^{-b\xi^2 t}\,. 
\end{align*}
Using again \eqref{finterp}, we obtain the last inequality in \eqref{Sest}. 
\end{proof}

The conclusion of Proposition~\ref{prop:Sest} is interesting for at least two 
reasons. First, if $a \neq b$ and if $W(t) = S(t)*W_0$ is a solution of the 
{\em linearized} equation \eqref{RD4} with initial data $W_0 \in X^2$, the first 
inequality in \eqref{Sest} (with $m = 1$) and the third one (with $m = 0$) 
imply that
\begin{equation}\label{linest}
  \|\tilde u_x(t)\|_{L^\infty} + \|\tilde v_x(t)\|_{L^\infty} \,=\, \cO(t^{-1/2})\,, 
  \qquad \|\tilde u(t) - \tilde v(t)\|_{L^\infty} \,=\, \cO(t^{-1})\,, 
\end{equation}
as $t \to +\infty$. We emphasize that, at the linear level, the difference 
$\tilde u - \tilde v$  measures the distance to the manifold $\cE$ of equilibria. 
Because of \eqref{linest}, we conjecture that the decay rates in \eqref{mainest1}
are optimal for general solutions of \eqref{RD}, see the discussion after
Proposition~\ref{main2}. Note that Proposition~\ref{main1} assumes that 
the diffusivities are equal, in which case Proposition~\ref{prop:Sest} 
shows that the difference $\tilde u(t) - \tilde v(t)$ decays exponentially
fast as $t \to +\infty$ when $W = (\tilde u,\tilde v)$ solves the linearized
equation. 

The second observation concerns the full, nonlinear equation \eqref{RD4}.  Using
the first two estimates in \eqref{Sest}, it is easy to prove by a fixed point
argument that the Cauchy problem for \eqref{RD4} is globally well-posed for
small data $W_0 \in L^p(\R)^2$ if $p < \infty$, and that the solutions satisfy
$\|W(t)\|_{L^\infty} = \cO(t^{-1/(2p)})$ as $t \to +\infty$. However, the
critical case $p = \infty$, which is relevant in the context of the present
paper, cannot be treated by this approach. In fact, using the optimal decay
estimates listed in Proposition~\ref{prop:Sest}, we are not even able to show
that the solution $W(t)$ of \eqref{RD4} originating from small initial data
$W_0 \in X^2$ stays uniformly bounded for all times, except in the case of equal
diffusivities where the problem is much simpler. The reason is that, if
$a \neq b$, the quantity $\|\cS(t)\cM\|_{L^1(\R)}$ decays like $t^{-1}$ as
$t \to +\infty$ and is therefore not integrable in time. This indicates that
the dynamics of system~\eqref{RD} in the space of bounded functions on $\R$ 
is not simple to analyze, even in a neighborhood of a spatially homogeneous 
equilibrium. 

\section{Conclusion and perspectives}\label{sec6}

The present work is only a modest incursion into the realm of extended
reaction-diffusion systems with a local gradient structure. Even for the very
simple example \eqref{RD}, which has many specific properties, our results are
incomplete and a global understanding of the dynamics is still missing.  To
be more precise, assume that the decay rates \eqref{higherest} hold for all
bounded and nonnegative solutions of \eqref{RD}, which is a reasonable conjecture
(although we are not able to prove that when $a \neq b$).  The quantity
$\rho = u - v^2$, which measures the distance to the manifold $\cE$ of
equilibria, satisfies the equation
\begin{equation}\label{rhoequation}
  \rho_t \,=\, a \rho_{xx} - k(1{+}4v)\rho + 2(a{-}b)v v_{xx} + 2a v_x^2\,.
\end{equation}
According to \eqref{higherest}, the last three terms in \eqref{rhoequation}
decay like $t^{-1}$ when $t \to +\infty$, whereas $\rho_{xx} = \cO(t^{-2})$. 
It is therefore reasonable to expect that
\begin{equation}\label{slaving}
  \rho \,=\, \frac{1}{k(1{+}4v)}\Bigl(2(a{-}b)v v_{xx} + 2a v_x^2\Bigr)
  + \cO(t^{-2})\,, \qquad t \to +\infty\,.
\end{equation}
Inserting this ansatz into the $v$-equation $v_t = bv_{xx} + 2k\rho$ and 
neglecting the higher-order terms, we obtain the following quasilinear 
diffusion equation
\begin{equation}\label{diffeq1}
  v_t \,=\, \frac{b+4av}{1+4v}\,v_{xx} + \frac{4a}{1+4v}\,v_x^2\,,
  \qquad x \in \R\,, \quad t > 0\,.
\end{equation}
Alternatively, setting $w = v + 2v^2$, we can write \eqref{diffeq1} in 
the more elegant form 
\begin{equation}\label{diffeq2}
  w_t \,=\, \bigl(D(w)w_x\bigr)_x\,, \qquad \hbox{where}\qquad 
  D(w) \,=\, a + \frac{b-a}{\sqrt{1+8w}}\,.
\end{equation}
We conjecture that the long-time asymptotics of any solution of \eqref{RD} in
$X_+^2$ corresponds to a slow motion along the manifold $\cE$ of chemical
equilibria, which is described to leading order by the diffusion equation
\eqref{diffeq1} or \eqref{diffeq2}. Note that the effective diffusion $D(w)$ in
\eqref{diffeq2} depends on the solution $w$ in a nontrivial way, except in the
particular case $a = b$ where \eqref{diffeq2} reduces to the linear heat
equation. Given two positive constants $w_\pm$, one can solve the Cauchy problem
for \eqref{diffeq2} with Riemann-like initial data 
\[
  w_0(x) \,=\, \begin{cases} w_- & \hbox{if } x < 0\,, \\
  w_+ & \hbox{if } x > 0\,,\end{cases}
\]
and this produces a self-similar solution of \eqref{diffeq2} which should 
describe the {\em diffusive mixing} of two chemical equilibria under the 
dynamics of \eqref{RD}, see \cite{GM} for a similar result in the context
of the Ginzburg-Landau equation. A rigorous justification of the slaving 
ansatz \eqref{slaving} and of the relevance of the diffusion equation 
\eqref{diffeq2} for the long-time asymptotics of the original system 
\eqref{RD} is left to a future work. 

On the other hand, the model we consider is just a simple example in a broad
class of systems, and it is natural to ask to which extent our analysis 
relies on specific features of \eqref{RD}. In a first step towards greater
generality, we consider the reaction $n\cA \xrightleftharpoons[]{} m\cB$, 
where $n,m$ are positive integers such that $n + m \ge 3$. The corresponding system
\begin{equation}\label{RDnm}
  u_t \,=\, a u_{xx} + nk\bigl(v^m-u^n\bigr)\,, \qquad
  v_t \,=\, b v_{xx} + mk\bigl(u^n-v^m\bigr)\,,
\end{equation}
is still cooperative, and the analogue of Proposition~\ref{prop:exist} holds.
It is also possible to find a polynomial EDS structure of the form \eqref{edf1}, 
which reads
\begin{align*}
  e \,&=\, \frac{1}{n(n{+}1)}\,u^{n+1} + \frac{1}{m(m{+}1)}\,v^{m+1}\,, \\
  f \,&=\, \frac{a}{n} u^n u_x + \frac{b}{m}\,v^m v_x\,, \\[2mm]
  d \,&=\, a u^{n-1} u_x^2 + b v^{m-1} v_x^2 + (u^n-v^m)^2\,.
\end{align*}
However, there is apparently less flexibility for constructing a second EDS 
structure in the sense of Section~\ref{sec3}, and at the moment we can do
that only if the ratio $a/b$ is not too different from $1$. Except for that 
limitation in the choice of the parameters $a,b$, the analogue of 
Proposition~\ref{main2} holds with a similar proof. 

The situation changes significantly when we turn our attention to more realistic 
chemical reactions such as $\cA_1 \xrightleftharpoons[]{} \cA_2 + \cA_3$. 
The associated system is still relatively simple
\begin{equation}\label{RD3x3}
  u_t \,=\, a u_{xx} - u + vw\,, \qquad
  v_t \,=\, b v_{xx} + u - vw\,, \qquad
  w_t \,=\, c w_{xx} + u - vw\,,
\end{equation}
but new difficulties arise that make the analysis substantially more
difficult. First, system~\eqref{RD3x3} is not cooperative, and does not satisfy
any comparison principle we know of. As a consequence, new arguments are needed
to show that the solutions of \eqref{RD3x3} stay uniformly bounded for all
nonnegative initial data in $L^\infty(\R)$. For the same reason, it is not
obvious that a solution starting close (in the $L^\infty$ sense) to a chemical
equilibrium will stay in a neighborhood of that equilibrium for all times. Next,
the only EDS structure we are aware of is given by the general formulas
\eqref{edfgen}, and we are not able to construct a second EDS structure that
controls the entropy dissipation, as we did in Section~\ref{sec3} for the
simpler system \eqref{RD}. At the moment, we are thus unable to prove the
analogue of Proposition~\ref{main2} for system~\eqref{RD3x3}, and a fortiori for
more complex reaction-diffusion systems of the form \eqref{RDgen}. We hope to be
able to elucidate some of these questions in the future.

\vfill\eject

\bigskip\noindent
{\bf Thierry Gallay}\\ 
Institut Fourier, Universit\'e Grenoble Alpes, 100 rue des Maths, 38610 Gi\`eres, 
France\\
Email\: {\tt Thierry.Gallay@univ-grenoble-alpes.fr}

\bigskip\noindent
{\bf Sini\v{s}a Slijep\v{c}evi\'c}\\
Department of Mathematics, University of Zagreb, Bijeni\v{c}ka 30, 
10000 Zagreb, Croatia\\
Email\: {\tt sinisa.slijepcevic@math.hr}

\end{document}